\title{cluster algebras of type~$D$: pseudotriangulations~approach}
\author[C.~Ceballos]{Cesar Ceballos$^{\star}$} 
\address[C.~Ceballos]{Department of Mathematics and Statistics, York University, Toronto}
\email{ceballos@mathstat.yorku.ca}
\urladdr{http://garsia.math.yorku.ca/~ceballos/}
\thanks{$^\star$CC was supported by the government of Canada through a Banting Postdoctoral Fellowship. He was also supported by a York University research grant.}
\author[V.~Pilaud]{Vincent Pilaud$^{\ddagger}$} 
\address[V.~Pilaud]{CNRS \& LIX, \'Ecole Polytechnique, Palaiseau}
\email{vincent.pilaud@lix.polytechnique.fr}
\urladdr{http://www.lix.polytechnique.fr/~pilaud/}
\thanks{$^\ddagger$VP was partially supported by the Spanish MICINN grant MTM2011-22792 and the French ANR grant EGOS (12 JS02 002 01).}
\newtheorem{theorem}{Theorem}
\newtheorem{proposition}[theorem]{Proposition}
\theoremstyle{definition}
\newtheorem{example}[theorem]{Example}
\newtheorem{remark}[theorem]{Remark}
\newcommand{\R}{\mathbb{R}} 
\newcommand{\cA}{\mathcal A} 
\newcommand{\cM}{\mathcal M} 
\newcommand{\configD}{\mathbf D} 
\renewcommand{\b}[1]{\mathbf{#1}} 
\newcommand{\disk}{D} 
\newcommand{\set}[2]{\left\{ #1 \;\middle|\; #2 \right\}} 
\newcommand{\ssm}{\smallsetminus} 
\newcommand{\eqdef}{\mbox{\,\raisebox{0.2ex}{\scriptsize\ensuremath{\mathrm:}}\ensuremath{=}\,}} 
\newcommand{\diag}[2]{[#1,#2]} 
\newcommand{\diagD}[2]{\ifthenelse{\equal{#2}{L}}{#1^{\textsc l}}{#1^{\textsc r}}} 
\newcommand{\pseudotriangle}{\raisebox{-1pt}{\includegraphics{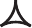}}} 
\newcommand{\pseudoquadrangle}{\raisebox{-1pt}{\includegraphics{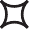}}} 
\newcommand{\Pseudo}{\mathsf{Pseudo}} 
\newcommand{\diagToVar}{\chi} 
\newcommandx{\cross}[2][1=\theta, 2=\delta]{[\,#1 \, \| \, #2 \,]} 
\newcommand{\quiver}{Q} 
\newcommand{\posToDiag}{\zeta} 
\newcommandx{\posToDiagc}[1][1=]{\posToDiag_{\sqc}^{#1}} 
\newcommand{\sqc}{{\mathrm c}} 
\newcommand{\Q}{{\mathrm Q}} 
\newcommand{\Qc}{\Q_\sqc} 
\newcommand{\wo}{w_\circ} 
\newcommand{\woc}{{\mathrm w}_\circ(\sqc)} 
\newcommand{\cwoc}{{\sqc\woc}} 
\newcommand{\modulo}[1]{\, (\text{mod } #1)} 
\newcommand{\tausqc}{\tau_\sqc} 
\newcommand{\subwordComplex}{\mathcal{SC}} 
\DeclareMathOperator{\conv}{conv} 
\newcommand{\fref}[1]{Figure~\ref{#1}} 
\newcommand{\ie}{\textit{i.e.}~} 
\newcommand{\eg}{\textit{e.g.}~} 
\newcommand{\para}[1]{\medskip\noindent\textbf{#1~---}} 
\definecolor{darkblue}{rgb}{0,0,0.7} 
\newcommand{\darkblue}{\color{darkblue}} 
\newcommand{\defn}[1]{\emph{\darkblue #1}} 
\definecolor{Gray}{gray}{0.9} 
\newcolumntype{g}{>{\columncolor{Gray}}c}
\newcolumntype{x}{@{\hspace{.1cm}}c@{\hspace{.1cm}}}
\newcommand{\twolines}[3]{\begin{minipage}{#1} \begin{center} \vspace*{.1cm} #2 \\ #3 \vspace*{.1cm} \end{center} \end{minipage}} 
\begin{document}

\vspace*{.7cm}

\begin{abstract}
We present a combinatorial model for cluster algebras of type~$D_n$ in terms of centrally symmetric pseudotriangulations of a regular~$2n$-gon with a small disk in the centre. This model provides convenient and uniform interpretations for clusters, cluster variables and their exchange relations, as well as for quivers and their mutations. We also present a new combinatorial interpretation of cluster variables in terms of perfect matchings of a graph after deleting two of its vertices. This interpretation differs from known interpretations in the literature. Its main feature, in contrast with other interpretations, is that for a fixed initial cluster seed, one or two graphs serve for the computation of all cluster variables. Finally, we discuss applications of our model to polytopal realizations of type~$D$ associahedra and connections to subword complexes and $c$-cluster complexes.
\end{abstract}

\maketitle

\vspace{-.4cm}


\defn{Cluster algebras}, introduced by S.~Fomin and A.~Zelevinsky in~\cite{FominZelevinsky-ClusterAlgebrasI, FominZelevinsky-ClusterAlgebrasII}, are commutative rings generated by a set of \defn{cluster variables}, which are grouped into overlapping \defn{clusters}. The clusters can be obtained from any \defn{initial cluster seed}~$X = \{x_1, \dots, x_n\}$ by a mutation process. Each mutation exchanges a single variable~$y$ to a new variable~$y'$ satisfying a relation of the form~${yy' = M_+ + M_-}$, where~$M_+$ and~$M_-$ are monomials in the variables involved in the current cluster and distinct from~$y$ and~$y'$. The precise content of these monomials~$M_+$ and~$M_-$ is controlled by a combinatorial object (a skew-symmetrizable matrix, or equivalently a weighted quiver~\cite{Keller}) which is attached to each cluster and is also transformed during the mutation. We refer to~\cite{FominZelevinsky-ClusterAlgebrasI} for the precise definition of these joint dynamics.

In~\cite[Theorem~3.1]{FominZelevinsky-ClusterAlgebrasI}, S.~Fomin and A.~Zelevinsky proved the Laurent phenomenon for cluster algebras: given any initial cluster seed $X = \{x_1, \dots, x_n\}$, all cluster variables obtained during the mutation process are \defn{Laurent polynomials} (quotients of polynomials by monomials) in the variables~$x_1, \dots, x_n$. Note that we think of the cluster variables as a set of variables satisfying some algebraic relations. These variables can be expressed in terms of the variables in any initial cluster seed~$X = \{x_1,\dots,x_n\}$ of the cluster algebra. Starting from a different cluster seed~$X' = \{x'_1, \dots, x'_n\}$ would give rise to an isomorphic cluster algebra, expressed in terms of the variables~$x'_1, \dots, x'_n$ of this seed. For concrete computations on specific examples, it is often important to obtain the expression of arbitrary variables in terms of arbitrary initial cluster seeds, and preferable to avoid the time consuming mutation process when possible.

Finite type cluster algebras, \ie cluster algebras whose mutation graph is finite, were classified in~\cite[Theorem~1.4]{FominZelevinsky-ClusterAlgebrasII} using the Cartan-Killing classification for finite crystallographic root systems. Finite type cluster algebras motivated research on Coxeter-Catalan combinatorics~\cite{FominZelevinsky-YSystems, Reading-coxeterSortable, Athanasiadis1, Athanasiadis2, Armstrong} and on constructions of generalized associahedra~\cite{ChapotonFominZelevinsky, HohlwegLangeThomas, Stella, PilaudStump-brickPolytope}.

In~\cite[Section~3.5]{FominZelevinsky-YSystems}\cite[Section~12.4]{FominZelevinsky-ClusterAlgebrasII}, S.~Fomin and A.~Zelevinsky introduced geometric models to concretely manipulate cluster algebras of types~$A$, $B$, $C$, and~$D$. For example, in the cluster algebra of type~$A_n$, cluster variables correspond to internal diagonals of an $(n+3)$-gon and clusters correspond to its triangulations. The quiver of a cluster~$X$ is the diagonal-rotation graph of the triangulation~$X$ (its nodes are the diagonals of~$X$ and its arcs connect diagonals which are clockwise consecutive in a triangle of~$X$). Cluster mutations correspond to flips between triangulations, and the exchange relation is given by an analog of Ptolemy's relation between the length of the diagonals and of the edges of a quadrilateral~$pqrs$ inscribed on a circle: ${|pr| \cdot |qs| = |pq| \cdot |rs| + |ps| \cdot |qr|}$. These models provide useful tools to get intuition and to experiment on combinatorial properties of the corresponding cluster algebras. Moreover, many properties of the cluster algebra can be read directly from the geometric model. For example, for a cluster variable~$y$ and an initial cluster seed~$X$ in type~$A$, the denominator of~$y$ with respect to~$X$ is the product of the variables in~$X$ whose diagonals cross the diagonal~$y$, while the numerator of~$y$ can be computed in terms of perfect matching enumeration in a well-chosen weighted bipartite graph~\cite{CarrollPrice, Prop}. More precisely, consider the vertex-triangle incidence graph~$G$ of the triangulation~$X$ where the edge joining a vertex~$v$ to a triangle~$t$ is weighted by the variable of the diagonal of~$t$ opposite to~$v$, and where the two endpoints of the diagonal~$y$ are deleted. The variable~$y$ is obtained as the sum of the weights of all perfect matchings of~$G$ divided by the product of the variables in~$X$ (where the weight of a perfect matching is the product of the weights of its edges). \fref{fig:matchingExample0} illustrates this computation. These perfect matching enumeration schemes were extended to certain families of infinite cluster algebras arising from triangulated surfaces in~\cite{SchifflerThomas, Schiffler, MusikerSchiffler, MusikerSchifflerWilliams}.

\begin{figure}
	\centerline{\begin{overpic}[width=.85\textwidth]{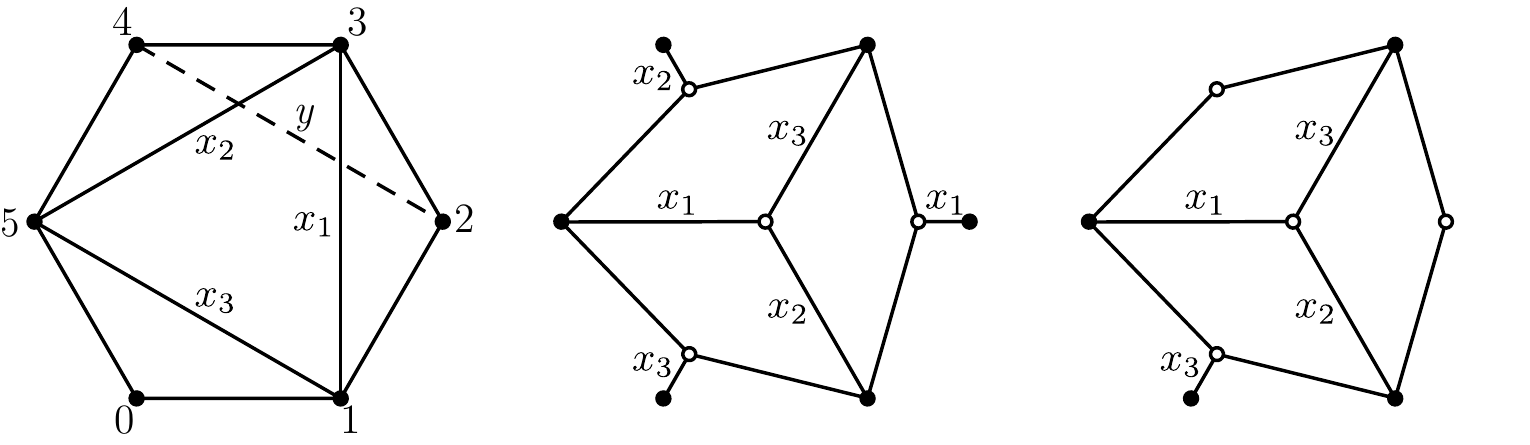} \put(15,-2){$T$} \put(85,-2){$G$} \end{overpic} \raisebox{1.7cm}{$\displaystyle y = \frac{x_3(x_1+x_2+x_3)}{x_1x_2x_3}$}}
	\caption{G.~Carroll and G.~Price's computation of a cluster variable in terms of perfect matchings in type~$A$~\cite{CarrollPrice, Prop}.}
	\label{fig:matchingExample0}
\end{figure}

In this paper, we focus on the cluster algebras of type~$D$ with both a combinatorial and an algebraic perspective. Different models exist for these cluster algebras, in terms of centrally symmetric triangulations of polygons with bicolored long diagonals~\cite[Section~3.5]{FominZelevinsky-YSystems}\cite[Section~12.4]{FominZelevinsky-ClusterAlgebrasII}, or in terms of tagged triangulations of a punctured $n$-gon~\cite{FominShapiroThurston}. In this paper, we propose an alternative geometric model for the cluster algebra of type~$D_n$ based on pseudotriangulations of a $2n$-gon with a small disk in the center. The six sections of this paper present the following concrete combinatorial, algebraic, and geometric applications of this model:
\begin{enumerate}[1.]
\item Cluster mutations are interpreted as flips in pseudotriangulations. Moreover, all exchange relations are described by a uniform algebraic relation between the variables on the diagonals and on the boundary of the pseudoquadrangles involved in the flips.
\item The quiver of a cluster~$X$ is the chord-rotation graph of the pseudotriangulation~$X$ (its nodes are the chords of~$X$ and its arcs connect all chords located on clockwise consecutive sides of pseudotriangles of~$X$).
\item The expression of any cluster variable~$y$ with respect to a fixed cluster seed~$X$ can be computed in terms of perfect matching enumeration of a weighted bipartite graph after deletion of two vertices determined by~$y$. The advantage of our model is that once the cluster seed~$X$ is fixed, one or two graphs serve for all cluster variables. These graphs are essentially the vertex-pseudotriangle incidence graph of the pseudotriangulation~$X$.
\item The model provides alternative polytopal realizations of the type~$D$ associahedron based on the polytope of pseudotriangulations constructed by G.~Rote, F.~Santos, and I.~Streinu~\cite{RoteSantosStreinu-pseudotriangulationPolytope}.
\item The pseudotriangulation approach also provides a geometric interpretation of the description of the type~$D$ $c$-cluster complex as a subword complex~\cite{CeballosLabbeStump}. This interpretation is closely related to the duality between pseudotriangulations and pseudoline arrangements presented by V.~Pilaud and M.~Pocchiola in~\cite{PilaudPocchiola}.
\item The $c$-cluster complex can be described in a purely combinatorial way in terms of pseudotriangulations. 
\end{enumerate}

We observe that the validity of our model can be argued by three distinct methods: either comparing our model with the classical model of S.~Fomin and A.~Zelevinsky~\cite[Section~3.5]{FominZelevinsky-YSystems}\cite[Section~12.4]{FominZelevinsky-ClusterAlgebrasII} (see Remark~\ref{rem:comparisonOldModel}), or comparing flips in pseudotriangulations with mutations in quivers (see Proposition~\ref{prop:quiverMutation}), or connecting pseudotriangulations to the clusters of type~$D$ via subword complexes (see Section~\ref{sec:subwordComplex}). We believe that it is not necessary to develop these arguments in detail. In fact, this paper should rather be considered as a tool to make by hand examples and explicit computations in type~$D$. Algebraic computations in this model are as elementary and uniform as in the triangulation model for type~$A$, and some combinatorial properties even become much simpler than their type~$A$ analogues (\eg the diameter of the type~$D$ associahedron, see~\cite{CeballosPilaud-diameterTypeD}).
Other works in type $D$ cluster algebras include~\cite{baur_frieze_2008,gunawan_t-path_2014}.


\section{Pseudotriangulations model}
\label{sec:model}

In this section, we present a combinatorial model for the cluster algebra of type~$D_n$ in terms of pseudotriangulations of a geometric configuration~$\configD_n$. Even if this model is closely related to the geometric model of S.~Fomin and A.~Zelevinsky for type~$D_n$~\cite[Section~3.5]{FominZelevinsky-YSystems}\cite[Section~12.4]{FominZelevinsky-ClusterAlgebrasII}, we prefer to use pseudotriangulations as they simplify and make uniform  combinatorial interpretations of clusters, cluster variables and exchange relations. The connection between our model and the classical one is discussed in more details in Remark~\ref{rem:comparisonOldModel}.

\begin{figure}[b]
	\centerline{\includegraphics[scale=1]{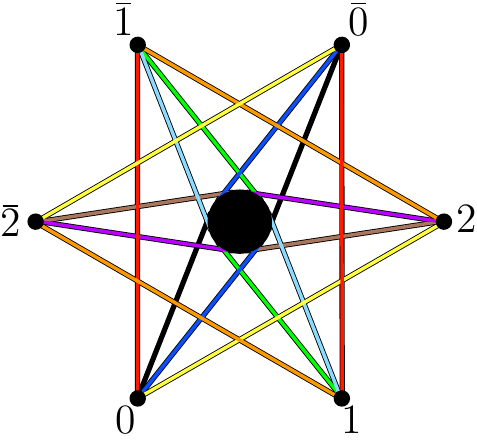} \quad \includegraphics[scale=1]{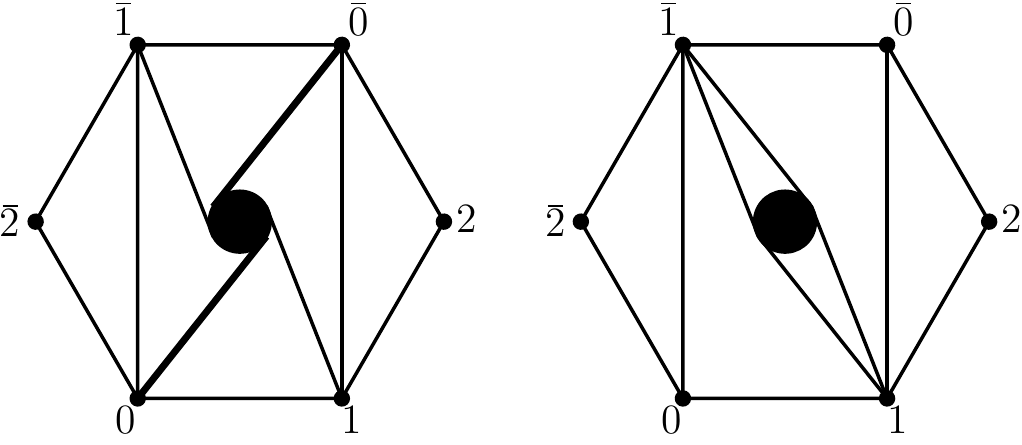}}
	\caption{The configuration~$\configD_3$ has $9$ centrally symmetric pairs of chords (left). A centrally symmetric pseudotriangulation~$T$ of~$\configD_3$ (middle). The centrally symmetric pseudotriangulation of~$\configD_3$ obtained from~$T$ by flipping the chords~$0^{\textsc r}$ and~$\bar 0^{\textsc r}$~(right).}
	\label{fig:configurationDn&Flips}
\end{figure}

We consider a regular convex~$2n$-gon, together with a disk~$\disk$ placed at the center of the $2n$-gon, whose radius is small enough such that~$\disk$ only intersects the long diagonals of the $2n$-gon. We denote by~$\configD_n$ the resulting configuration, see \fref{fig:configurationDn&Flips}. We call \defn{chords} of~$\configD_n$
\begin{itemize}
\item all the diagonals of the $2n$-gon, except the long ones, and
\item all the segments tangent to the disk~$\disk$ and with one endpoint among the vertices of the $2n$-gon. Note that each vertex~$p$ is incident to two such chords; we denote by~$\diagD{p}{L}$ (resp.~by~$\diagD{p}{R}$) the chord emanating from~$p$ and tangent on the left (resp.~right) to the disk~$\disk$. We call these chords \defn{central}.
\end{itemize}
Cluster variables, clusters, exchange relations, compatiblity degrees, and denominators of cluster variables in the cluster algebra~$\cA(D_n)$ can be interpreted geometrically as follows:
\begin{enumerate}[(i)]
\item Cluster variables correspond to \defn{centrally symmetric pairs of (internal) chords} of the geometric configuration~$\configD_n$. See \fref{fig:configurationDn&Flips}\,(left). To simplify notations, we identify a chord~$\delta$, its centrally symmetric copy~$\bar \delta$, and the pair~$\{\delta, \bar \delta\}$. We denote by~$\diagToVar_\delta = \diagToVar_{\bar \delta}$ the cluster variable corresponding to the pair of chords~$\{\delta, \bar \delta\}$.

\item Clusters correspond to \defn{centrally symmetric pseudotriangulations} of~$\configD_n$ (\ie maximal centrally symmetric crossing-free sets of chords of~$\configD_n$). Each pseudotriangulation of~$\configD_n$ contains exactly~$2n$ chords, and partitions $\conv(\configD_n) \ssm \disk$ into \defn{pseudotriangles}  (\ie interiors of simple closed curves with three convex corners related by three concave chains), see \fref{fig:configurationDn&Flips}. We refer to~\cite{RoteSantosStreinu-survey} for a detailed survey on pseudotriangulations, including their history, motivations, and applications.

\item Cluster mutations correspond to \defn{flips} of centrally symmetric pairs of chords between centrally symmetric pseudotriangulations of~$\configD_n$. A flip in a pseudotriangulation~$T$ replaces an internal chord~$e$ by the unique other internal chord~$f$ such that~$(T \ssm e) \cup f$ is again a pseudotriangulation of~$T$. To be more precise, deleting~$e$ in~$T$ merges the two pseudotriangles of~$T$ incident to~$e$ into a pseudoquadrangle~$\pseudoquadrangle$ (\ie the interior of a simple closed curve with four convex corners related by four concave chains), and adding~$f$ splits the pseudoquadrangle~$\pseudoquadrangle$ into two new pseudotriangles. The chords~$e$ and~$f$ are the two unique chords which lie both in the interior of~$\pseudoquadrangle$ and on a geodesic between two opposite corners of~$\pseudoquadrangle$. We refer again to~\cite{RoteSantosStreinu-survey} for more details. 

For example, the two pseudotriangulations of \fref{fig:configurationDn&Flips} are related by a centrally symmetric pair of flips. We have represented different kinds of flips between centrally symmetric pseudotriangulations of the configuration~$\configD_n$ in \fref{fig:typeDflip}. The flip graphs on centrally symmetric pseudotriangulations of~$\configD_3$ and~$\configD_4$ are illustrated in Figures~\ref{fig:typeD3associahedron} and~\ref{fig:typeD4associahedron}.

\begin{figure}
	\centerline{\includegraphics[width=\textwidth]{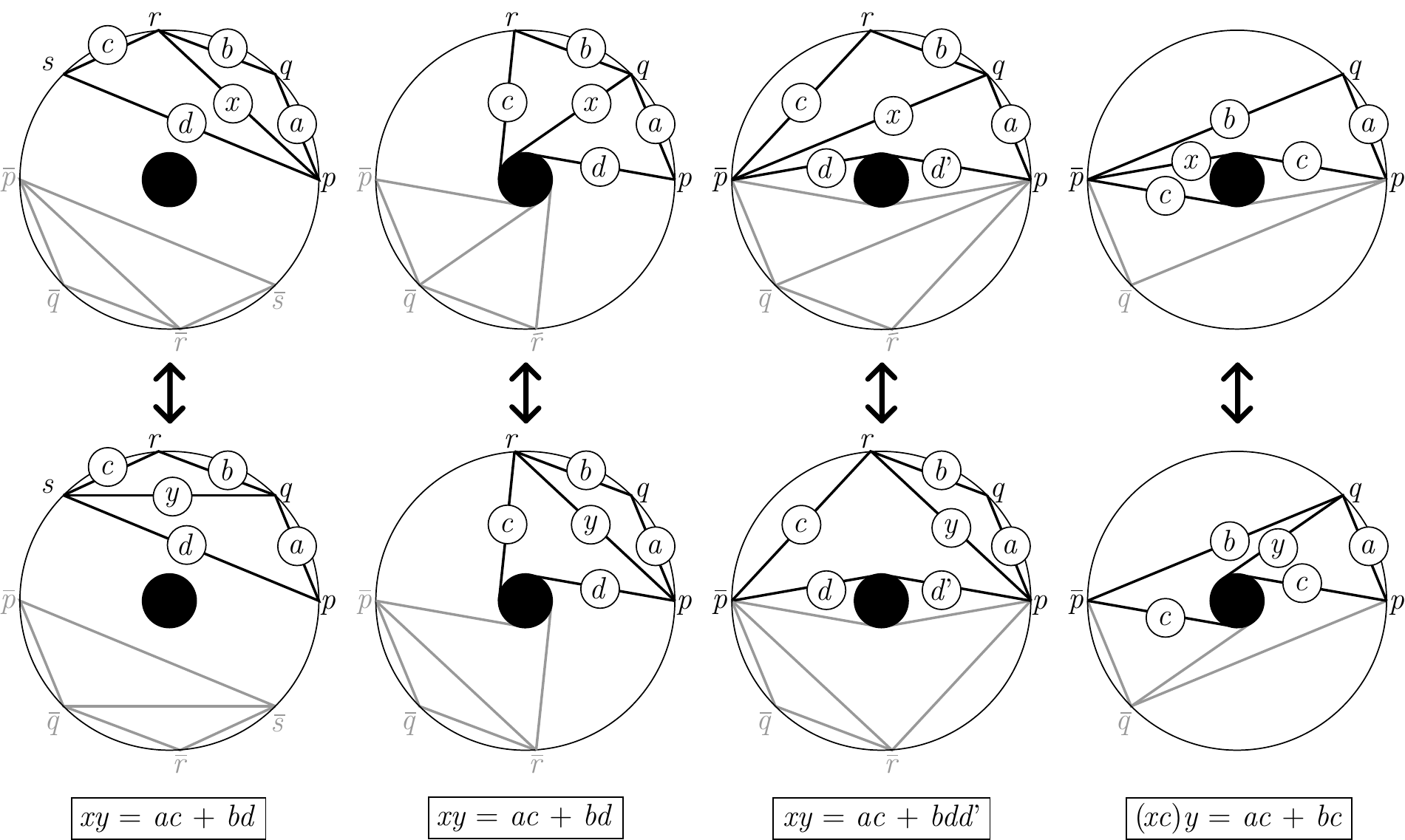}}
	\caption{Different kinds of flips and exchange relations in type~$D$.}
	\label{fig:typeDflip}
\end{figure}

\begin{figure}
        \centerline{\includegraphics[width=.55\textwidth]{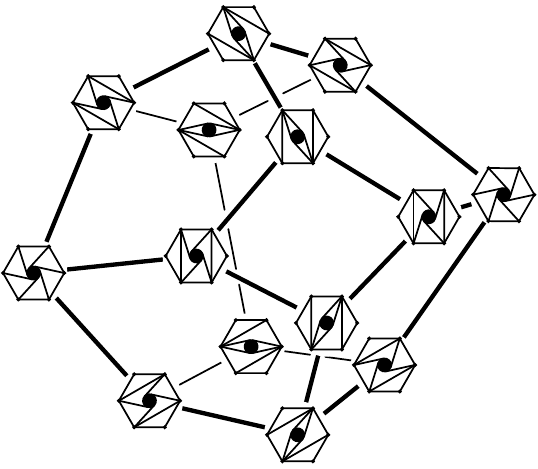}}
        \caption{The type~$D_3$ mutation graph interpreted geometrically by centrally symmetric pseudotriangulations of~$\configD_3$. Note that this graph is the $1$-skeleton of the $3$-dimensional associahedron since~$D_3 = A_3$.}
        \label{fig:typeD3associahedron}
\end{figure}

As in type~$A$, the exchange relations between cluster variables during a cluster mutation can be understood in the geometric picture. More precisely, flipping~$e$ to~$f$ in the pseudoquadrangle~$\pseudoquadrangle$ with convex corners~$\{p,q,r,s\}$ (and simultaneously $\bar e$ to~$\bar f$ in the centrally symmetric pseudoquadrangle~$\bar \pseudoquadrangle$) results in the exchange relation
\[
\hspace{1cm}
\Pi(\pseudoquadrangle,p,r) \cdot \Pi(\pseudoquadrangle,q,s) = \Pi(\pseudoquadrangle,p,q) \cdot \Pi(\pseudoquadrangle,r,s) + \Pi(\pseudoquadrangle,p,s) \cdot \Pi(\pseudoquadrangle,q,r),
\]
where
\begin{itemize}
\item $\Pi(\pseudoquadrangle,p,r)$ denotes the product of the cluster variables~$\diagToVar_\delta$ corresponding to all chords~$\delta$ which appear along the geodesic from~$p$ to~$r$ in~$\pseudoquadrangle$ --- and similarly for $\Pi(\pseudoquadrangle,q,s)$ --- and
\item $\Pi(\pseudoquadrangle,p,q)$ denotes the product of the cluster variables~$\diagToVar_\delta$ corresponding to all chords~$\delta$ which appear on the concave chain from~$p$ to~$q$ in~$\pseudoquadrangle$ --- and similarly for $\Pi(\pseudoquadrangle,q,r)$, $\Pi(\pseudoquadrangle,r,s)$, and~$\Pi(\pseudoquadrangle,p,s)$.
\end{itemize}
For example, the four flips in \fref{fig:typeDflip} result in the following relations:
\renewcommand{\arraystretch}{1.1}
\[
\hspace{1cm}
\begin{array}{@{}c@{\ }c@{\ }c@{\ }c@{\ }c@{\quad }l}
\diagToVar_{\diag{p}{r}} \cdot \diagToVar_{\diag{q}{s}} & = & \diagToVar_{\diag{p}{q}} \cdot \diagToVar_{\diag{r}{s}} & + & \diagToVar_{\diag{p}{s}} \cdot \diagToVar_{\diag{q}{r}}, \\[3pt]
\diagToVar_{\diag{p}{r}} \cdot \diagToVar_{\diagD{q}{R}} & = & \diagToVar_{\diag{p}{q}} \cdot \diagToVar_{\diagD{r}{R}} & + & \diagToVar_{\diagD{p}{R}} \cdot \diagToVar_{\diag{q}{r}}, \\[3pt]
\diagToVar_{\diag{p}{r}} \cdot \diagToVar_{\diag{q}{\bar p}} & = & \diagToVar_{\diag{p}{q}} \cdot \diagToVar_{\diag{r}{\bar p}} & + & \diagToVar_{\diagD{\bar p}{L}} \cdot \diagToVar_{\diagD{p}{R}} \cdot \diagToVar_{\diag{q}{r}}, \\[3pt]
\diagToVar_{\diagD{\bar p}{L}} \cdot \diagToVar_{\diagD{p}{R}} \cdot \diagToVar_{\diagD{q}{R}} & = & \diagToVar_{\diag{p}{q}} \cdot \diagToVar_{\diagD{\bar p}{R}} & + & \diagToVar_{\diag{q}{\bar p}} \cdot \diagToVar_{\diagD{p}{R}}.
\end{array}
\]
Note that the last relation will always simplify by~$\diagToVar_{\diagD{p}{R}} = \diagToVar_{\diagD{\bar p}{R}}$. For a concrete example, in the flip presented in \fref{fig:configurationDn&Flips}, we obtain the relation
\[
\hspace{1cm}
\diagToVar_{\diagD{\bar 0}{L}} \cdot \diagToVar_{\diagD{0}{R}} \cdot \diagToVar_{\diagD{2}{R}} = \diagToVar_{\diag{0}{2}} \cdot \diagToVar_{\diagD{\bar 0}{R}} + \diagToVar_{\diag{2}{\bar 0}} \cdot \diagToVar_{\diagD{0}{R}},
\]
which simplifies to
\[
\hspace{1cm}
\diagToVar_{\diagD{\bar 0}{L}} \cdot \diagToVar_{\diagD{2}{R}} = \diagToVar_{\diag{0}{2}} + \diagToVar_{\diag{2}{\bar 0}}.
\]

\item The compatibility degree between two centrally symmetric pairs of chords~$\theta, \delta$ is the \defn{crossing number} $\cross[\theta][\delta]$, defined as the number of times that a representative diagonal of the pair~$\delta$ crosses the chords of~$\theta$.

\item Given any initial centrally symmetric seed pseudotriangulation~$T \eqdef \{ \theta_1, \dots ,\theta_n\}$ and any chord~$\delta$, the denominator of the cluster variable~$\diagToVar_{\delta}$ with respect to the initial cluster seed~$T$ is the product of all variables in chords of~$T$ crossed by~$\delta$, see~\cite{CeballosPilaud-dvectors}.
\end{enumerate}

\enlargethispage{.4cm}
\begin{figure}[h]
	\centerline{
		\includegraphics[scale=1]{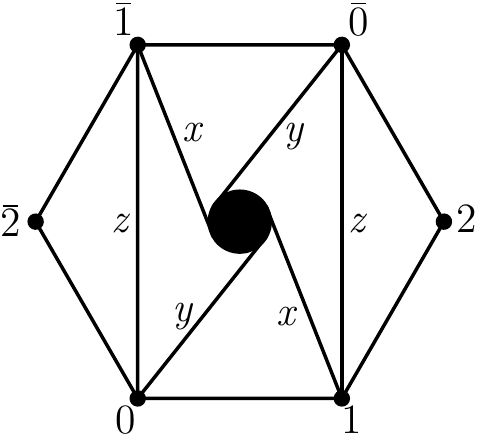}
		\qquad
		\begin{minipage}[b]{2cm} \begin{align*} \diagToVar_{\diagD{0}{L}} & = \frac{z+1}{x} \\[.3cm] \diagToVar_{\diagD{1}{L}} & = \frac{z+1}{y} \\[.3cm] \diagToVar_{\diagD{2}{R}} & = \frac{x+y}{z} \end{align*} \vspace*{.45cm} \end{minipage}
		\qquad
		\begin{minipage}[b]{2cm} \begin{align*} \diagToVar_{[0,2]} & = \frac{x+y+yz}{xz} \\[.3cm] \diagToVar_{[1,\bar 2]} & = \frac{x+y+xz}{yz} \\[.3cm] \diagToVar_{\diagD{2}{L}} & = \frac{(x+y)(z+1)}{xyz} \end{align*} \vspace*{.45cm} \end{minipage}
	}
	\caption{Cluster variables associated with centrally symmetric pairs of chords of~$\configD_3$.}
	\label{fig:exampleVariables}
\end{figure}

\begin{landscape}
\pagestyle{empty}

\begin{figure}[p]
	\vspace*{-2.5cm}
	\centerline{\includegraphics[width=1.45\textwidth]{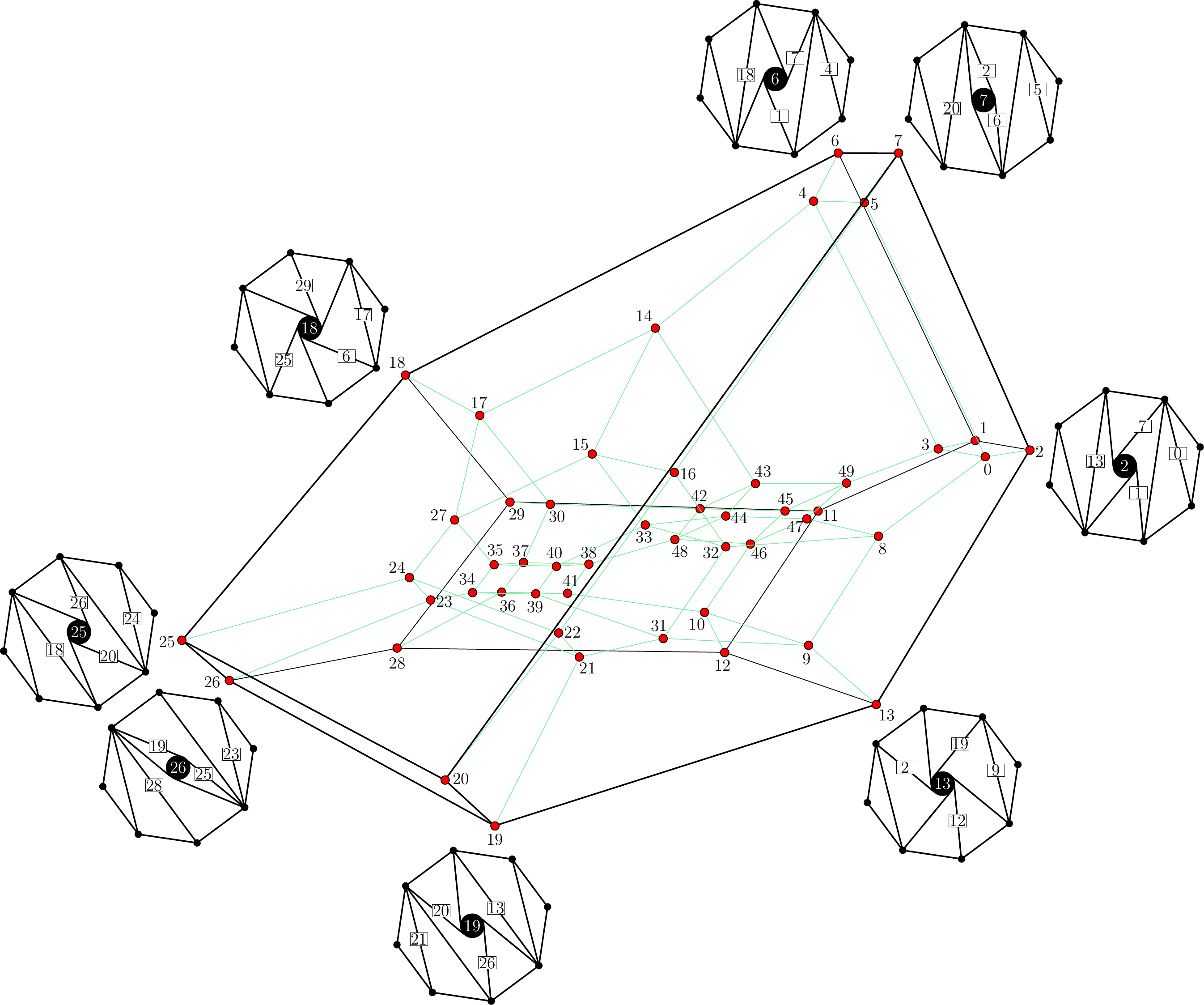}}
	\caption{The type~$D_4$ mutation graph. We have represented some of the corresponding centrally symmetric pseudotriangulations of~$\configD_4$ on this picture, while the others can be found on Figure~\ref{fig:42pseudotriangulationsHorizontal}. In each pseudotriangulation, the number at the center of the disk is its label in the mutation graph, and each pair of chords is labeled with the pseudotriangulation obtained when flipping it. The underlying graph used for the representation is a Schlegel diagram of the type~$D_4$ associahedron~\cite{ChapotonFominZelevinsky, HohlwegLangeThomas, PilaudStump-brickPolytope}.}
	\label{fig:typeD4associahedron}
\end{figure}

\begin{figure}[p]
	\vspace*{-2.5cm}
	\centerline{\includegraphics[width=1.45\textwidth]{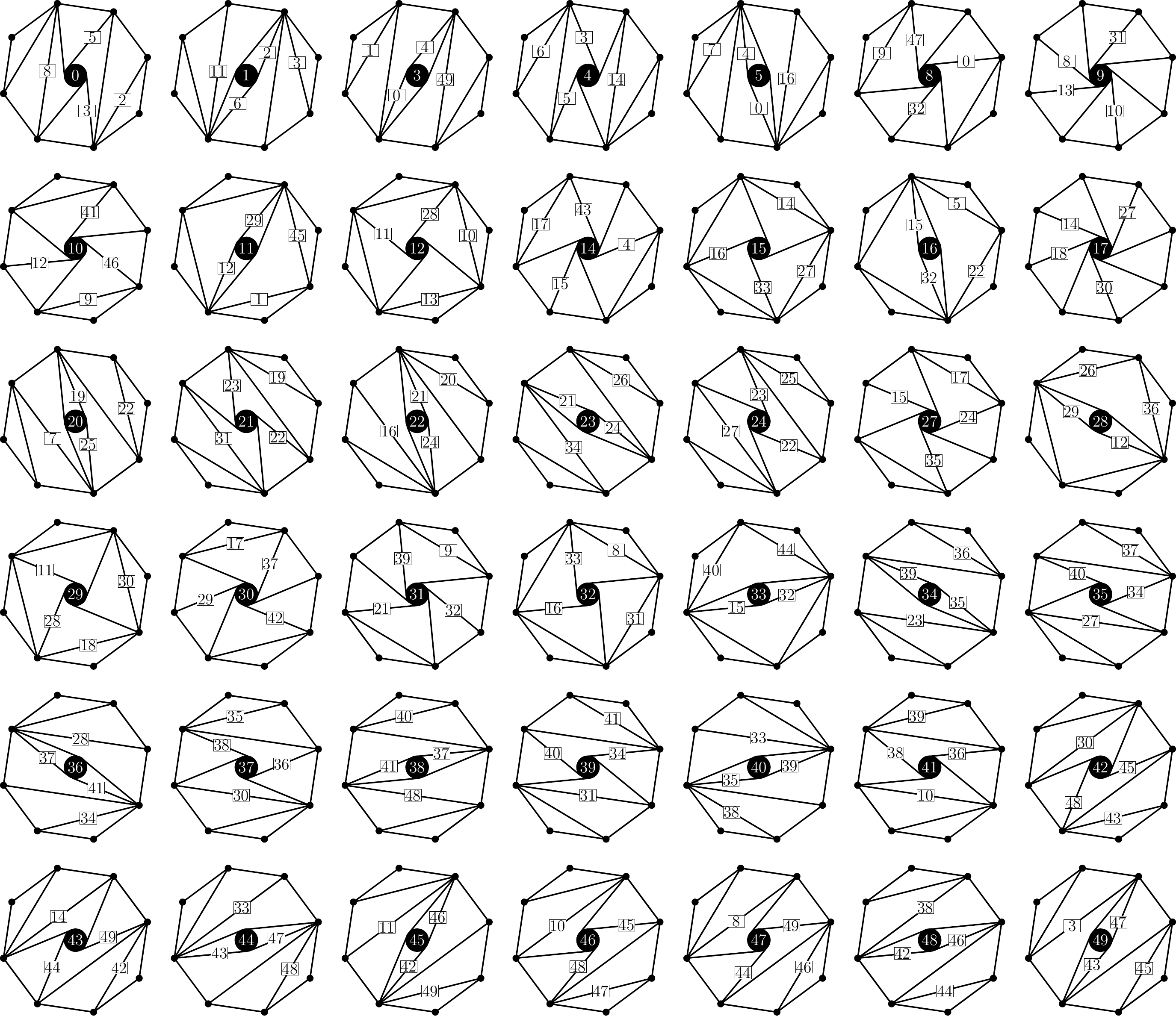}}
	\caption{The remaining~$42$ centrally symmetric pseudotriangulations of the configuration~$\configD_4$. See \fref{fig:typeD4associahedron} for the other~$8$ centrally symmetric pseudotriangulations, the mutation graph and the explanation of the labeling conventions.}
	\label{fig:42pseudotriangulationsHorizontal}
\end{figure}

\end{landscape}

\begin{remark}
\label{rem:comparisonOldModel}

\begin{figure}[t]
        \centerline{\includegraphics[scale=1]{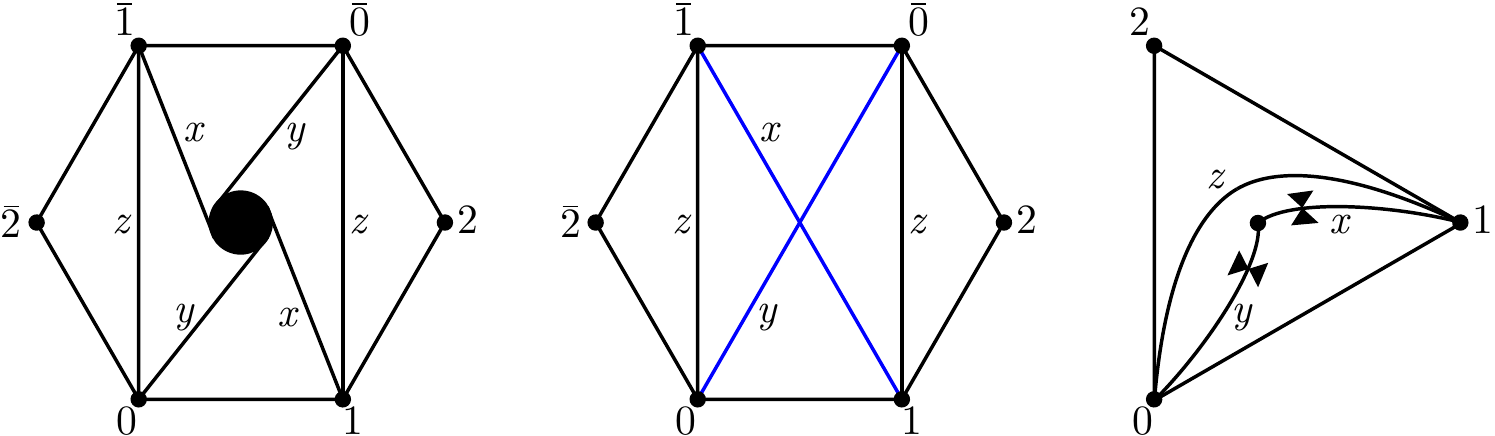}}
        \caption{The correspondence between the interpretation of type~$D_3$ clusters by centrally symmetric pseudotriangulations of~$\configD_3$ (left), by centrally symmetric triangulations of the hexagon with bicolored diagonals (middle), and by tagged triangulations of the punctured triangle (right).}
        \label{fig:differentModels}
\end{figure}

\begin{figure}[b]
        \centerline{\includegraphics[width=.48\textwidth]{typeD3associahedron}\hspace*{-.5cm}\includegraphics[width=.48\textwidth]{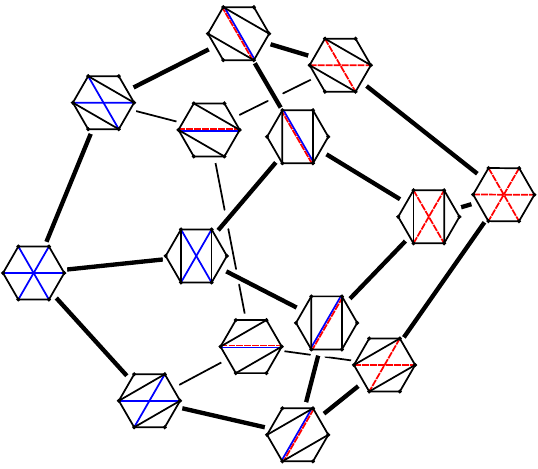}\hspace*{-.5cm}\includegraphics[width=.48\textwidth]{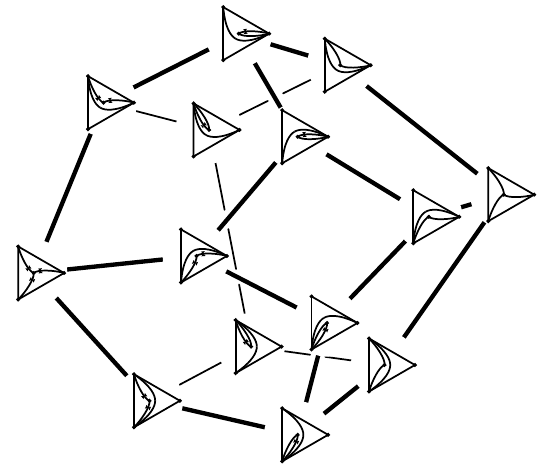}}
        \caption{The type~$D_3$ mutation graph interpreted geometrically by centrally symmetric pseudotriangulations of~$\configD_3$ (left), by centrally symmetric triangulations of the hexagon with bicolored diagonals (middle), and by tagged triangulations of the punctured triangle (right).}
        \label{fig:typeD3associahedraDifferentModels}
\end{figure}

Our geometric interpretation of type~$D$ cluster algebras slightly differs from that of S.~Fomin and A.~Zele\-vinsky in~\cite[Section~3.5]{FominZelevinsky-YSystems}\cite[Section~12.4]{FominZelevinsky-ClusterAlgebrasII}. Namely, to obtain their interpretation, we can remove the disk in the configuration~$\configD_n$ and replace the centrally symmetric pairs of chords~$\{\diagD{p}{L}, \diagD{\bar p}{L}\}$ and~$\{\diagD{p}{R}, \diagD{\bar p}{R}\}$ by long diagonals~$\diag{p}{\bar p}$ colored in red and blue respectively. Long diagonals of the same color are then allowed to cross, while long diagonals of different colors cannot. See Figures~\ref{fig:differentModels} and~\ref{fig:typeD3associahedraDifferentModels} (in color). Flips and exchange relations can then be worked out, with special rules for colored long diagonals, see~\cite[Section~3.5]{FominZelevinsky-YSystems}\cite[Section~12.4]{FominZelevinsky-ClusterAlgebrasII}.

Our interpretation can also be translated to the interpretation of S.~Fomin, M.~Shapiro and D.~Thurston~\cite{FominShapiroThurston} in terms of tagged triangulations of a convex $n$-gon with one puncture in its centre. Label the vertices of the $n$-gon in counterclockwise direction from~$0$ up to~$n-1$. If~$a\neq b$ are two vertices on the boundary, denote by~$M_{a,b}$ an arc from~$a$ to~$b$ which is homotopy equivalent to the path along the boundary from~$a$ to~$b$ in counterclockwise direction. Let~$M_{a,a}$ be the straight line connecting vertex~$a$ with the puncture, and~$M_{a,a}^{-1}$ be the same line with a tag. Two arcs~$M_{a,a}$ and~$M_{b,b}^{-1}$ are considered to cross when~$a\neq b$. Any two other arcs are considered to cross if they cross in the usual sense. A \defn{tagged triangulation} is a maximal set of non-crossing tagged arcs (up to homotopy equivalence). If~$a<b$,~$M_{a,b}$ corresponds to the pair of chords~$\diag{a}{b},\diag{\bar a}{\bar b}$ in the pseudotriangulations model. If~$a>b$, $M_{a,b}$ corresponds to the pair~$\diag{a}{\bar b},\diag{\bar a}{b}$. The line~$M_{a,a}$ is replaced by the pair~$\{\diagD{a}{L}, \diagD{\bar a}{L}\}$, and~$M_{a,a}^{-1}$ is replaced by the pair~$\{\diagD{a}{R}, \diagD{\bar a}{R}\}$. See Figures~\ref{fig:differentModels} and~\ref{fig:typeD3associahedraDifferentModels}. Exchange rules are similar to type~$A$, with special rules for tagged arcs connected to the puncture.
\end{remark}

There are different kinds of pseudotriangles, according on whether they touch the central disk~$\disk$ and the boundary of the $2n$-gon. We say that a pseudotriangle is \defn{central} if it has a corner on the disk~$\disk$, \defn{degenerate central} if it is bounded by the disk~$\disk$ and two central chords~$\diagD{p}{L}, \diagD{p}{R}$, and \defn{internal} if it contains no edge of the boundary of the $2n$-gon.

As illustrated on \fref{fig:central}, we say that a pseudotriangulation is
\begin{itemize}
\item \defn{central} if it contains two degenerate central pseudotriangles, or equivalently, if it contains exactly one left pair and one right pair of central chords, and
\item of~\defn{type left} (resp.~\defn{type right}) if all its central chords are left (resp.~right) central chords.
\end{itemize}

\begin{figure}[h]
	\centerline{
	\begin{tabular}{ccccc}
		\includegraphics[width=.18\textwidth]{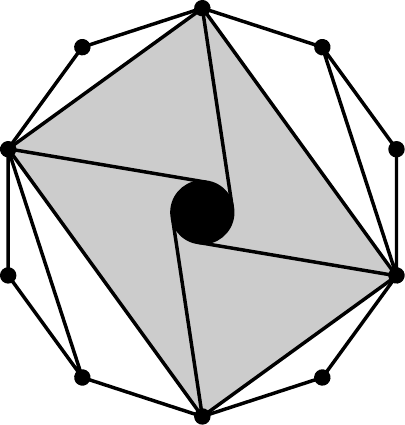} &
		\includegraphics[width=.18\textwidth]{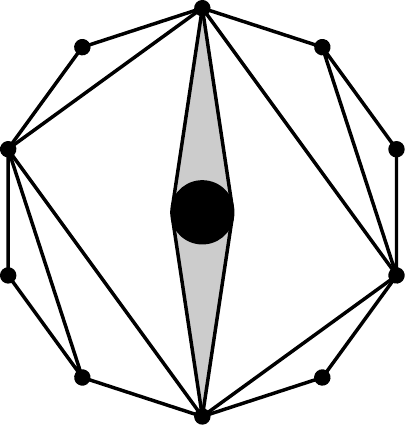} &
		\includegraphics[width=.18\textwidth]{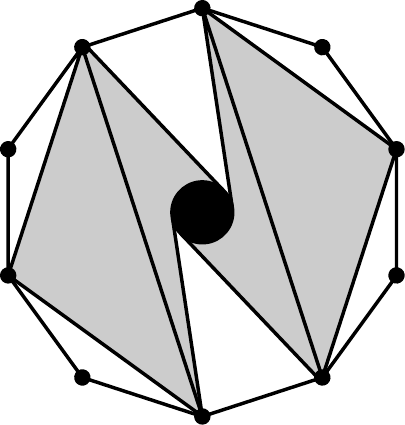} &
		\includegraphics[width=.18\textwidth]{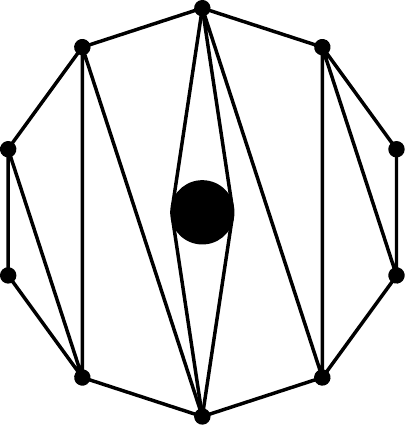} &
		\includegraphics[width=.18\textwidth]{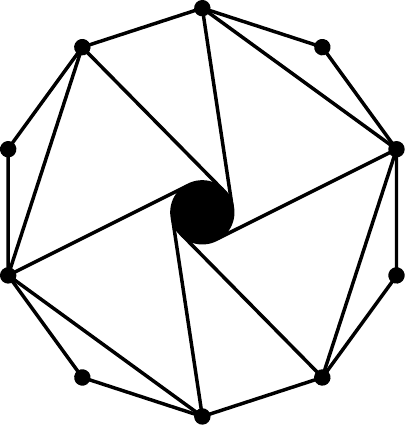} \\
		central &
		degenerate central &
		internal &
		central &
		left type \\
		pseudotriangles & 
		pseudotriangles & 
		pseudotriangles & 
		pseudotriangulation & 
		pseudotriangulation
	\end{tabular}
	}
	\caption{Different kinds of pseudotriangles and pseudotriangulations.}
	\label{fig:central}
\end{figure}


\section{Quivers}
\label{sec:quivers}

\begin{figure}[b]
	\centerline{\includegraphics[width=.9\textwidth]{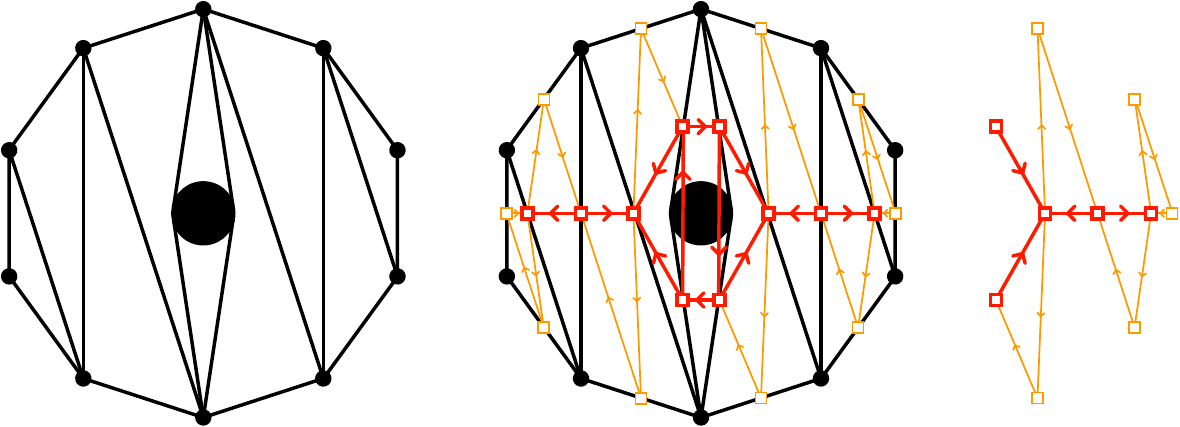}}
	\caption{A pseudotriangulation~$T$ (left), the double quiver~$\widetilde\quiver(T)$ (middle) and the quiver~$\quiver(T)$ (right).}
	\label{fig:quiver}
\end{figure}

In this section, we give a direct geometric interpretation of type~$D_n$ quivers using centrally symmetric pseudotriangulations of~$\configD_n$. This construction is very similar to type~$A$, except that triangles are replaced by pseudotriangles. An explicit example is illustrated on \fref{fig:quiver}. \fref{fig:quivers} shows several examples along a path of flips between pseudotriangulations. Given a centrally symmetric pseudotriangulation~$T$ of~$\configD_n$, we first construct a double quiver~$\widetilde\quiver(T)$ as follows:
\begin{description}
\item[nodes] middle points of each chord of~$T$;
\item[arcs] arcs connecting all chords of each side of a psuedotriangle~$\pseudotriangle$ of~$T$ to all chords of the next side of~$\pseudotriangle$ in clockwise order. The only technicality is to treat degenerate central pseudotriangles: as for exchange relations, it is natural to consider degenerate central pseudotriangles as flatten pseudotriangles, which indicates that the four central chords must form a clockwise $4$-cycle in~$\widetilde\quiver(T)$.
\end{description}
Finally, to obtain the quiver~$\quiver(T)$ of the pseudotriangulation~$T$, we fold the double quiver~$\widetilde\quiver(T)$ by central symmetry, and simplify opposite arcs (due to $4$-cycles arising when~$T$ has only $4$ central chords) and duplicated arcs (arising from pairs of centrally symmetric copies of arcs of~$\widetilde\quiver(T)$).

The reader is invited to observe in \fref{fig:quivers} the following facts concerning the quiver~$\quiver(T)$:
\begin{enumerate}[(i)]
\item The left and right star pseudotriangulations correspond to the two $n$-cycle quivers (which are empty when~$n = 2$).
\item There are two kinds of cycles in the quiver~$\quiver(T)$: one (possibly empty) cycle connecting all central chords of~$T$, and other cycles that arise from internal pseudotriangles of~$T$.
\item The quiver of a pseudotriangulation is acyclic if and only if one of the four following situations happens, where the shaded part has no internal triangles:
\begin{center}
\medskip
\includegraphics[width=.9\textwidth]{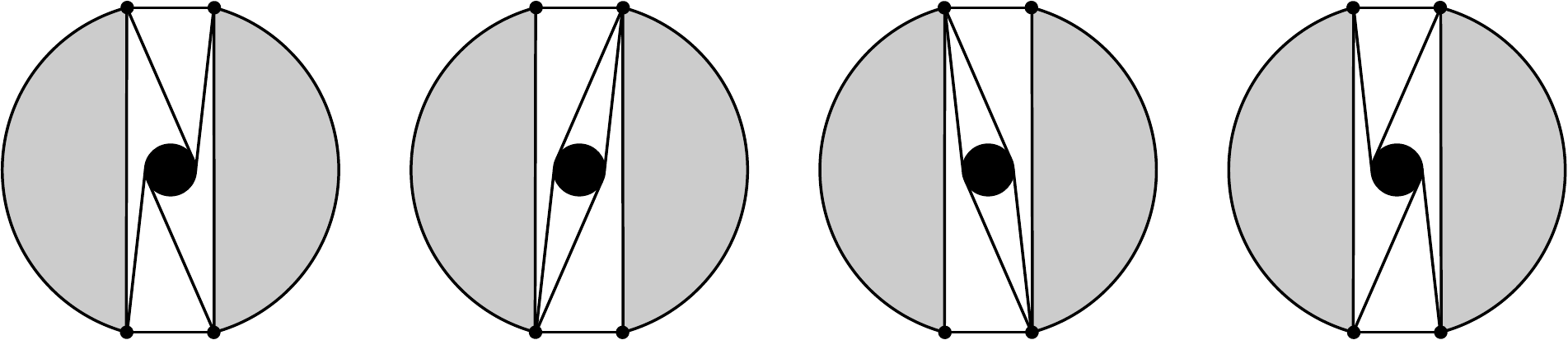}
\medskip
\end{center}
\end{enumerate}

\medskip
To show that~$\quiver(T)$ is precisely the quiver of the cluster seed corresponding to~$T$, we thus only need to show that the map~$T \to \quiver(T)$ sends flips on pseudotriangulations to mutations on~quivers.

\begin{proposition}
\label{prop:quiverMutation}
Flips in pseudotriangulations correspond to quiver mutations. More precisely, the quiver of the pseudotriangulation produced by the flip of a pair of chords~$\chi$ in~$T$ coincides with the quiver produced by mutation of the node corresponding to~$\chi$ in the quiver of~$T$.
\end{proposition}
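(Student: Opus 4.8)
The plan is to exploit the \emph{locality} of both operations. A flip of a centrally symmetric pair of chords~$\chi = \{e,\bar e\}$ only alters the two pseudoquadrangles obtained by merging the pseudotriangles incident to~$e$ and to~$\bar e$; outside these pseudoquadrangles both the pseudotriangulation and its quiver are untouched. Dually, mutation of a quiver at the node~$\chi$ only modifies the arrows incident to~$\chi$ and the arrows running between two neighbors of~$\chi$. Hence it suffices to compare these two local modifications. I would first carry out the comparison in the double quiver~$\widetilde\quiver(T)$, where no folding intervenes and a flip of~$e$ is an honest planar pseudotriangulation flip (performed simultaneously on~$\bar e$), and only afterwards descend to~$\quiver(T)$ by folding and simplification.

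First I would pin down the neighbors of~$\chi$ in~$\quiver(T)$. Since~$e$ is a side of each of its two incident pseudotriangles, the clockwise-consecutive-sides rule makes the in-neighbors of~$e$ the chords on the clockwise-preceding side, and its out-neighbors the chords on the clockwise-following side, within each of the two incident pseudotriangles. With this in hand, the verification in the generic case---when neither incident pseudotriangle is central---is literally the type~$A$ computation: writing~$\pseudoquadrangle$ for the merged pseudoquadrangle with corners~$p,q,r,s$, inserting~$f$ re-splits~$\pseudoquadrangle$ into the two complementary pseudotriangles, and reading off the new arrows among the boundary chords reproduces exactly the three mutation steps at~$\chi$ (create a shortcut~$a\to b$ for each~$2$-path~$a\to\chi\to b$, reverse the arrows at~$\chi$, and cancel the resulting~$2$-cycles). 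The only difference from triangulations is that a side of a pseudotriangle may carry several chords, but the clockwise rule treats each such chord uniformly, so the pattern is unchanged.

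It then remains to treat the flips that involve central chords, organized by the cases displayed in~\fref{fig:typeDflip}: a flip with exactly one central incident pseudotriangle, and the flips that pass between central and type-left or type-right pseudotriangulations, thereby creating or destroying degenerate central pseudotriangles. For each case I would draw the local double quiver---using the convention that the four central chords of a degenerate central configuration form a clockwise~$4$-cycle---then fold by central symmetry, cancel the opposite and duplicated arrows, and compare the result with the mutation of~$\chi$. The four exchange relations recorded after~\fref{fig:typeDflip} prescribe the shape that each local quiver must have and thus serve as a running consistency check.

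The genuinely delicate point, and the main obstacle, is the interaction between folding and mutation in the central cases. When~$T$ is central the central chords contribute~$4$-cycles to~$\widetilde\quiver(T)$ which partially cancel upon folding, and it is not a priori clear that folding then mutating agrees with mutating then folding, since each operation introduces its own round of~$2$-cycle cancellations. The crux is to check that these two sources of cancellation are compatible, so that both orders produce the same quiver~$\quiver(T')$. I expect the small case~$n=2$, where the~$4$-cycle collapses entirely and extra arrows cancel, to require separate bookkeeping; apart from this, every case reduces to the uniform type~$A$ pattern established above.
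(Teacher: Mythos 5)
Your proposal is correct and follows essentially the same route as the paper: the published proof is precisely a case-by-case verification of the four flips of \fref{fig:typeDflip}, with the generic case being the classical type-$A$ quadrilateral computation and the remaining central cases checked directly. The fold-versus-mutate commutation you single out as the crux is in fact dissolved by your own stated method (fold each local double quiver first, then compare the result with the mutation of the node~$\chi$ in~$\quiver(T)$), which is exactly the direct comparison the paper performs implicitly.
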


\begin{proof}
We check this property separately on the four possible flips presented in \fref{fig:typeDflip}: the first flip is a classical flip in a quadrangle; the second flip behaves exactly as a classical flip; the third flip is similar except that one side of the classical rectangle is bended; the fourth flip is different but it indeed mutates the associated quiver.
Note that this also explains the exchange relation between variables presented in the previous section: 
\[
\Pi(\pseudoquadrangle,p,r) \cdot \Pi(\pseudoquadrangle,q,s) = \Pi(\pseudoquadrangle,p,q) \cdot \Pi(\pseudoquadrangle,r,s) + \Pi(\pseudoquadrangle,p,s) \cdot \Pi(\pseudoquadrangle,q,r).
\]
Indeed, the exchange relation is driven by the arcs of the quiver incident to the mutated variable, which in turn are determined by the sides of the pseudoquadrangle involved in the flip.
\end{proof}

\begin{remark}
\enlargethispage{.3cm}
The quiver associated to a type~$D$ cluster can be obtained from the different geometric models mentioned in Remark~\ref{rem:comparisonOldModel}. See~\cite[Section~3.5]{FominZelevinsky-YSystems}\cite[Section~12.4]{FominZelevinsky-ClusterAlgebrasII}\cite{FominShapiroThurston} for a detailed presentation of these models, and \fref{fig:quiverDifferentModels} for an illustration of the associated quivers.
\begin{figure}[h]
        \centerline{\includegraphics[scale=1]{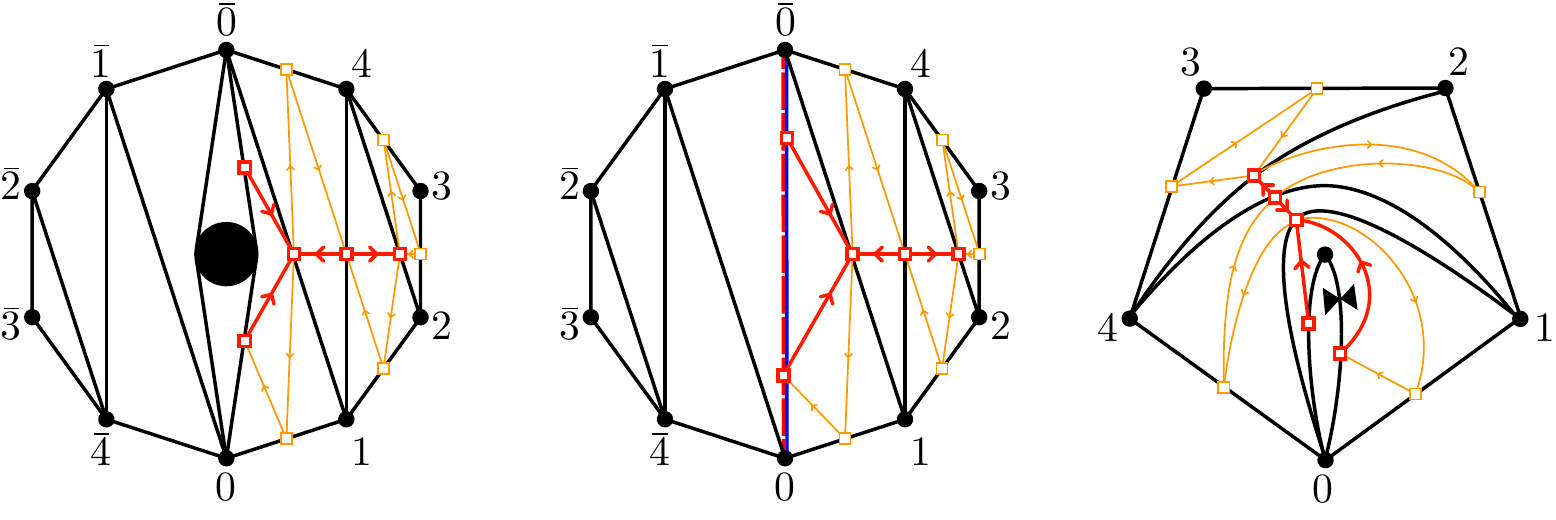}}
        \caption{The quiver associated to the centrally symmetric pseudotriangulation of~$\configD_3$, the centrally symmetric triangulation of the hexagon with bicolored diagonals, and by tagged triangulations of the punctured triangle of \fref{fig:differentModels}.}
        \label{fig:quiverDifferentModels}
\end{figure}
\begin{figure}[p]
	\centerline{\includegraphics[width=\textwidth]{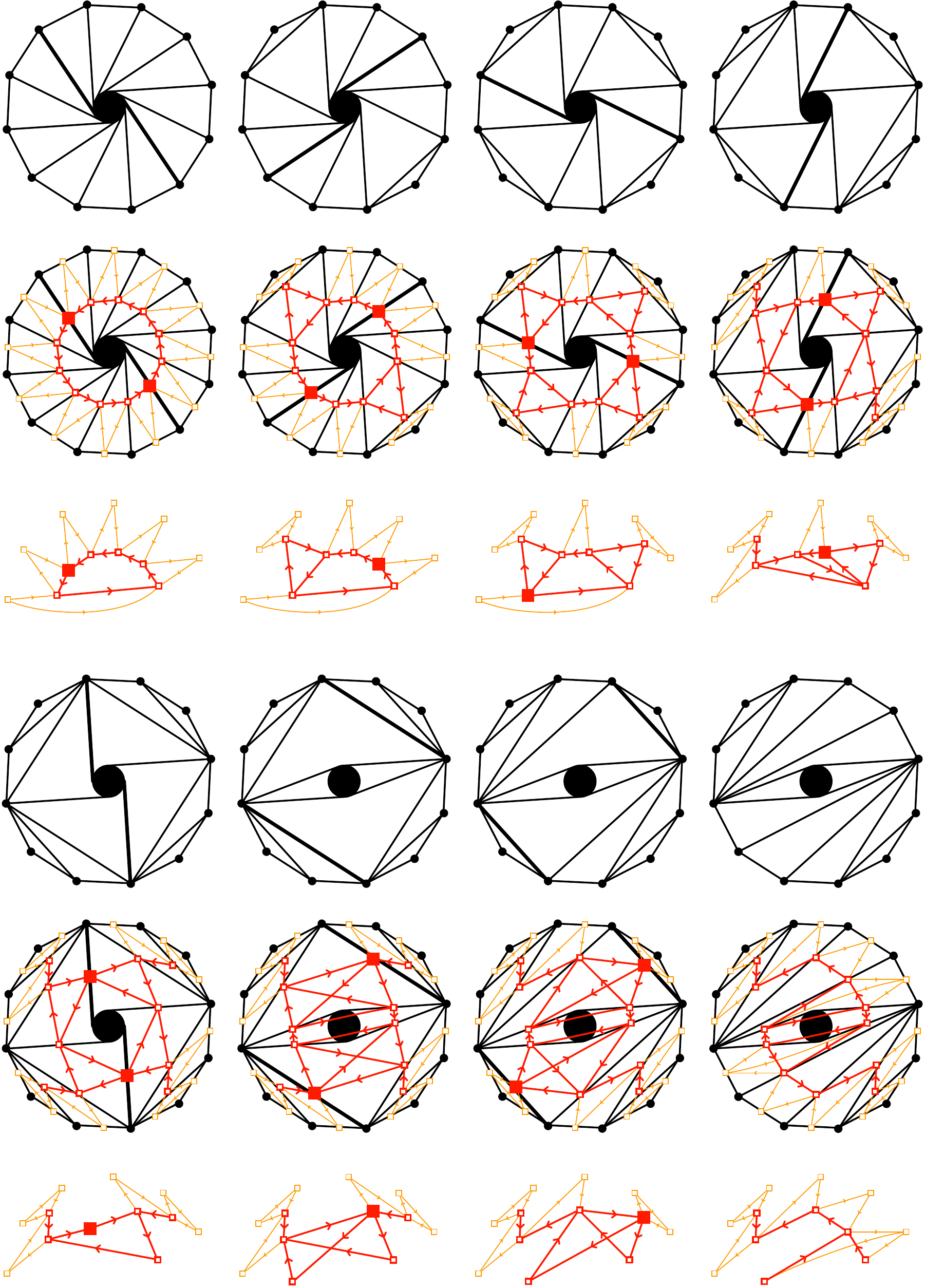}}
	\caption{A sequence of flips on centrally symmetric pseudotriangulations, and the induced mutation sequence of their associated quivers.}
	\label{fig:quivers}
\end{figure}
\end{remark}


\section{Perfect matching enumerators}
\label{sec:perfectMatchings}

This section presents a new perfect matching interpretation of cluster variables of type~$D$ based on the combinatorial model presented in Section~\ref{sec:model}. For a given initial cluster, the cluster variables can be computed in terms of weighted perfect matching enumeration of a graph after deleting two of its vertices. An alternative interpretation of cluster variables in terms of perfect matchings of snake graphs follows from the work of G.~Musiker, R.~Schiffler and L.~Williams~\cite{MusikerSchifflerWilliams} in cluster algebras from surfaces, for the particular case where the surface is a convex polygon with one puncture.  
One main difference of our method is that once the cluster seed is fixed one or two graphs serve for all cluster variables (see Remark~\ref{rem:matchingComparison}), while in~\cite{MusikerSchifflerWilliams} the snake graph needs to be recomputed depending on the cluster variable. Another perfect matching interpretation of cluster variables in classical finite types (including type $D$) is presented by G.~Musiker in~\cite{Musiker} with respect to a bipartite seed.   

\begin{figure}[b]
	\centerline{
	\begin{tabular}{ccc}
		\includegraphics[width=.25\textwidth]{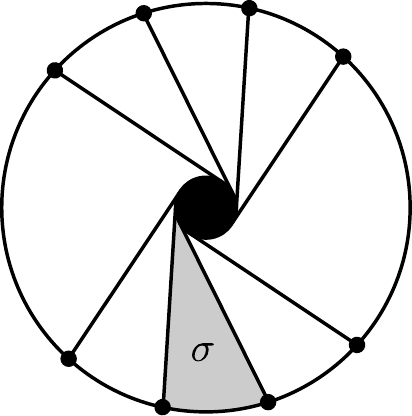} &
		\includegraphics[width=.25\textwidth]{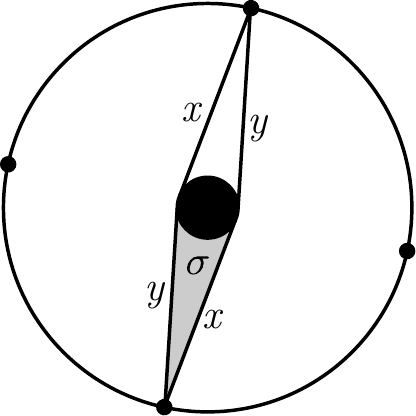} &
		\includegraphics[width=.25\textwidth]{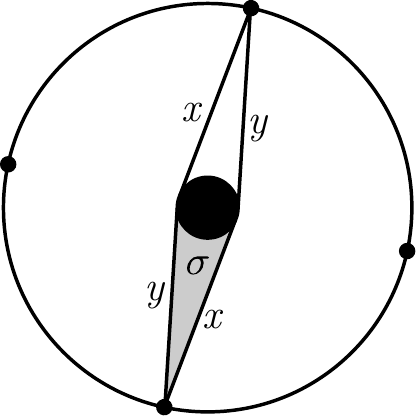} \\
		$T$ & $T$ & $T$ \\[.3cm]
		\includegraphics[width=.25\textwidth]{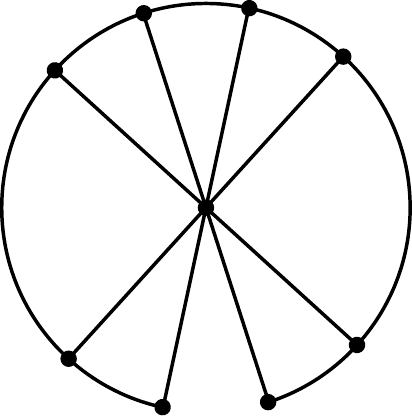} &
		\includegraphics[width=.25\textwidth]{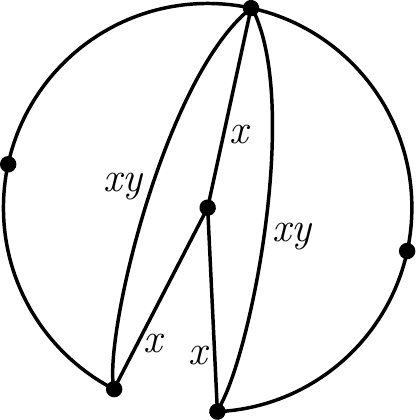} &
		\includegraphics[width=.25\textwidth]{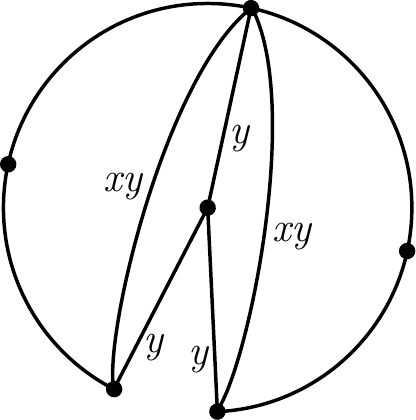} \\
		opening~$T_\sigma$ & opening $T_\sigma$ along~$x$ & opening $T_\sigma$ along~$y$
	\end{tabular}
	}
	\caption{The opening~$T_\sigma$ of a centrally symmetric pseudotriangulation~$T$ with respect to a central pseudotriangle~$\sigma$ of~$T$.}
	\label{fig:opening}
\end{figure}

Fix a centrally symmetric pseudotriangulation~$T$ of~$\configD_n$ whose internal chords are centrally symmetrically labeled with distinct variables and whose boundary edges are labeled by 1. Define the \defn{opening} of~$T$ with respect to a central pseudotriangle~$\sigma$ of~$T$ to be the triangulation~$T_\sigma$ illustrated in Figure~\ref{fig:opening} (see also Figures~\ref{fig:matchingExample1} and~\ref{fig:matchingExample2}), and call \defn{weights} the induced variable labeling of its edges. We omit the labels 1 in all figures for simplicity. Define a weighted bipartite graph~${G_\sigma=G_\sigma(T)}$ as the weighted vertex-triangle incidence graph of~$T_\sigma$. That is, $G_\sigma$ has black vertices corresponding to the vertices of~$T_\sigma$, and  white vertices corresponding to the triangles in~$T_\sigma$. A black vertex~$v$ is connected to a white vertex~$w$ if they are incident in the triangulation~$T_\sigma$, that is, if~$v$ is one of the vertices of the triangle corresponding to~$w$. In such case, the weight of the edge~$vw$ in~$G_\sigma$ is the weight of the edge of the triangle~$w$ opposite to the vertex~$v$.

\begin{figure}
	\centerline{
	\begin{tabular}{cc@{\hspace{0cm}}c}
		\includegraphics[width=.3\textwidth]{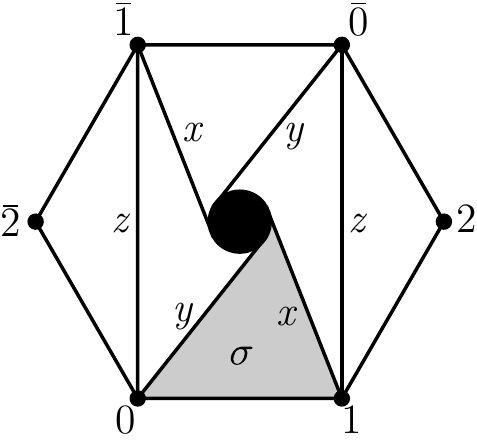} &
		\includegraphics[width=.3\textwidth]{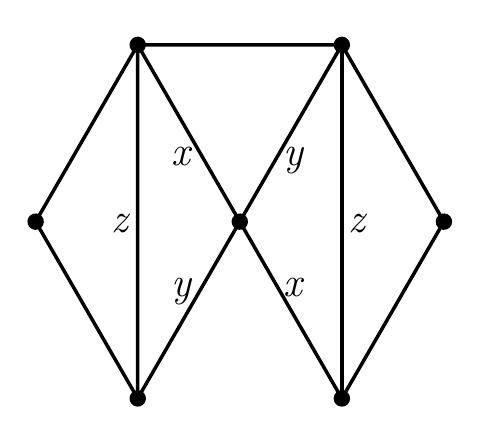} &
		\includegraphics[width=.3\textwidth]{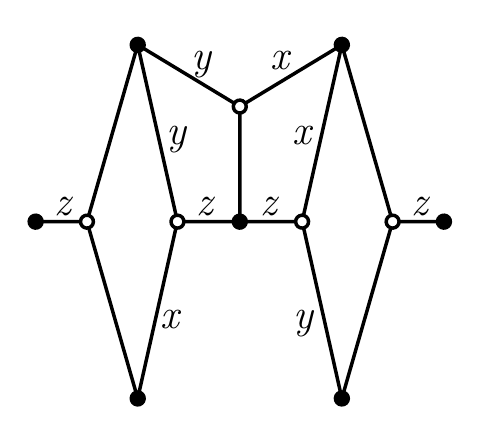} \\[-.3cm]
		$T$ & $T_\sigma$ & $G_\sigma$ \\[.1cm]
		\includegraphics[width=.3\textwidth]{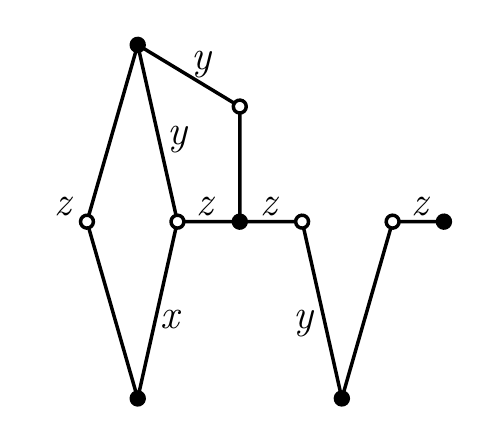} &
		\includegraphics[width=.3\textwidth]{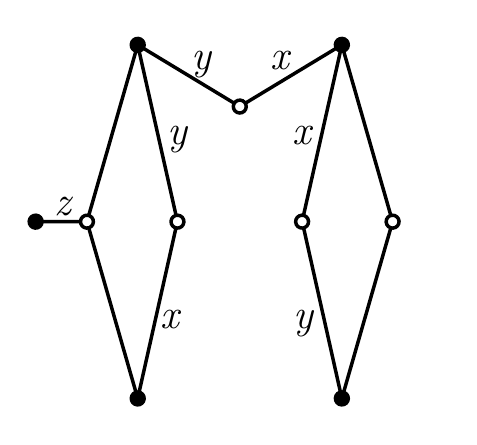} &
		\includegraphics[width=.3\textwidth]{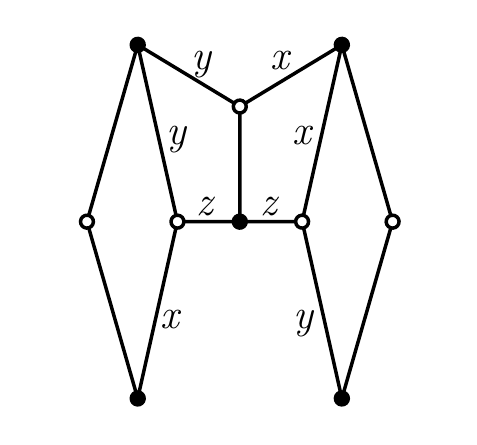} \\[-.4cm]
		$G_{\sigma,\diag{\bar 0}{\bar 2}}$ & $G_{\sigma,\diagD{2}{R}}$ & $G_{\sigma,\diagD{\bar 2}{L}}$\\[0.3cm]
		$w_{\sigma,\diag{\bar 0}{\bar 2}}=yz(x+y+yz)$ & $w_{\sigma,\diagD{2}{R}}=xyz(x+y)$ & $w_{\sigma,\diagD{\bar 2}{L}}=(x+y)(x+y+xz+yz)$\\[0.3cm]
		$m_{\sigma,\diag{\bar 0}{\bar 2}}=\displaystyle \frac{x+y+yz}{xz}$ & $m_{\sigma,\diagD{2}{R}}=\displaystyle \frac{x+y}{z}$ & $m_{\sigma,\diagD{\bar 2}{L}}=\displaystyle \frac{x+y}{z}\cdot \frac{x+y+xz+yz}{xyz}$\\[0.3cm]
		$\diagToVar_{\diag{\bar 0}{\bar 2}}=\displaystyle \frac{x+y+yz}{xz}$ & $\diagToVar_{\diagD{2}{R}}=\displaystyle \frac{x+y}{z}$ & $\diagToVar_{\diagD{\bar 2}{L}}=\displaystyle \frac{x+y+xz+yz}{xyz} $
	\end{tabular}
	}
	\caption{Examples of cluster variable computations in terms of perfect matchings of the graph~$G_{\sigma,\delta}$.}
	\label{fig:matchingExample1}
\end{figure}

\begin{figure}
	\centerline{
	\begin{tabular}{ccc}
		\includegraphics[width=.3\textwidth]{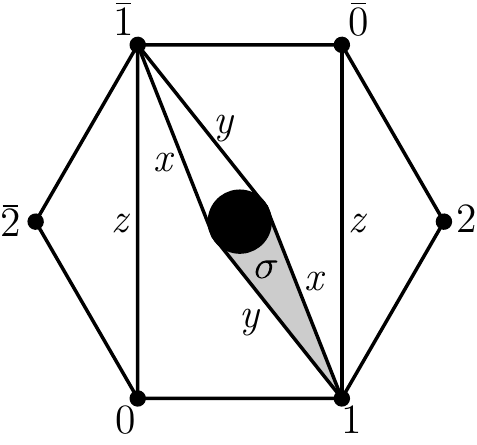} &
		\includegraphics[width=.3\textwidth]{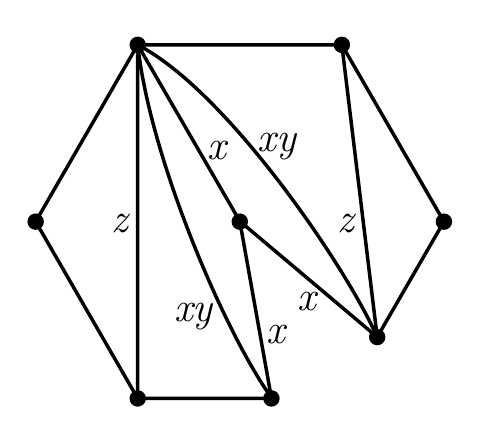} &
		\includegraphics[width=.3\textwidth]{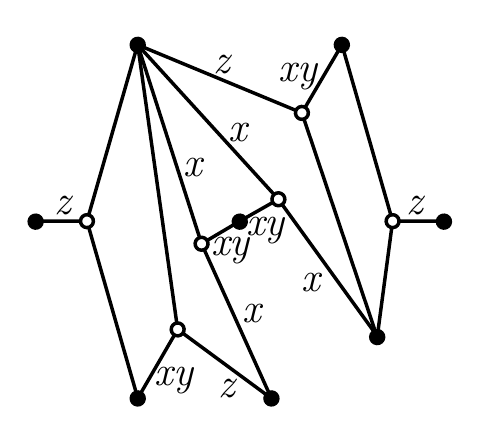} \\[-.3cm]
		$T$ & $T_\sigma$ & $G_\sigma$ \\[.1cm]
		\includegraphics[width=.3\textwidth]{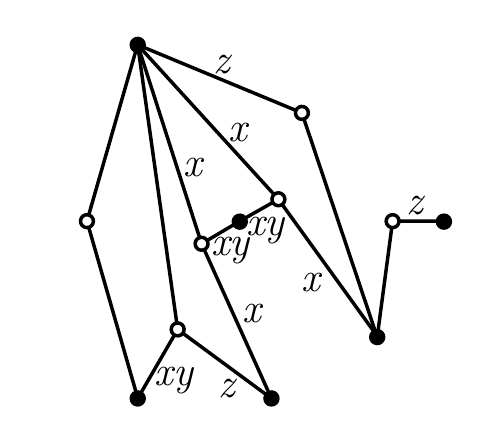} &
		\includegraphics[width=.3\textwidth]{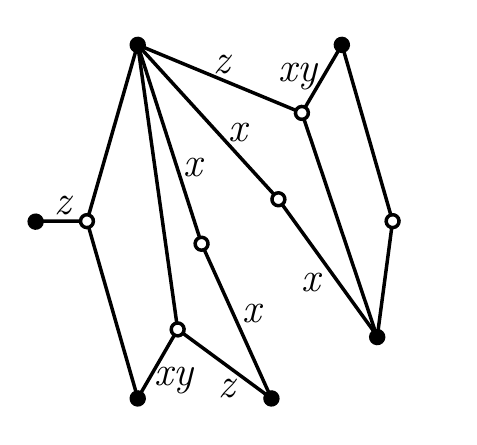} &
		\includegraphics[width=.3\textwidth]{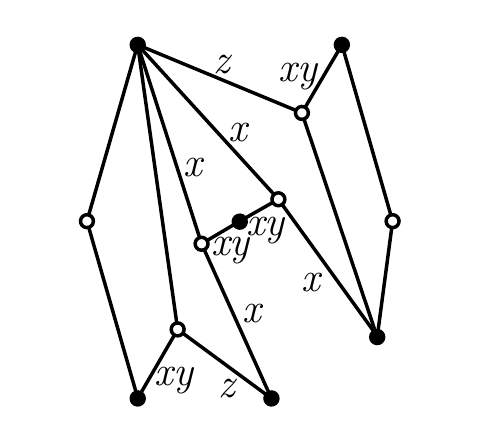} \\[-.4cm]
		$G_{\sigma,\diag{\bar 0}{\bar 2}}$ & $G_{\sigma,\diagD{2}{R}}$ & $G_{\sigma,\diagD{\bar 2}{L}}$\\[0.3cm]
		$w_{\sigma,\diag{\bar 0}{\bar 2}}=x^2yz\big(xy+(z+1)^2\big)$ & $w_{\sigma,\diagD{2}{R}}=x^3yz(xy+z+1)$ & $w_{\sigma,\diagD{\bar 2}{L}}=x^2y(xy+z+1)^2$\\[0.3cm]
		$m_{\sigma,\diag{\bar 0}{\bar 2}}=\displaystyle \frac{xy+(z+1)^2}{xyz}$ & $m_{\sigma,\diagD{2}{R}}=\displaystyle \frac{xy+z+1}{yz}$ & $m_{\sigma,\diagD{\bar 2}{L}}=\displaystyle \frac{(xy+z+1)^2}{xyz^2}$\\[0.3cm]
		$\diagToVar_{\diag{\bar 0}{\bar 2}}=\displaystyle \frac{xy+(z+1)^2}{xyz}$ & $\diagToVar_{\diagD{2}{R}}=\displaystyle \frac{xy+z+1}{yz}$ & $\diagToVar_{\diagD{\bar 2}{L}}=\displaystyle \frac{xy+z+1}{xz}$	\end{tabular}
	}
	\caption{Examples of cluster variable computations in terms of perfect matchings of the graph~$G_{\sigma,\delta}$.}
	\label{fig:matchingExample2}
\end{figure}

For each chord~$\delta$, we will define a graph~$G_{\sigma,\delta}=G_{\sigma,\delta}(T)$ obtained from~$G_\sigma$ by deleting two black vertices and all their incident edges. The cluster variable~$\diagToVar_\delta$ associated to the centrally symmetric pair~$\{\delta,\delta'\}$ will be determined in terms of perfect matchings of~$G_{\sigma,\delta}$. Figures~\ref{fig:matchingExample1} and~\ref{fig:matchingExample2} illustrate some examples of~$T_\sigma$,~$G_\sigma$,~$G_{\sigma,\delta}$ and~$\diagToVar_\delta$.

We say that the opening~$T_\sigma$ is of \defn{type left} (resp.~\defn{right}) if~$T$ is of type left (resp.~right) or~$T$ is a central pseudotriangulation which is opened along its left (resp.~right) central chord. 
The graph~$G_{\sigma,\delta}$ is obtained from~$G_\sigma$ by deleting
\begin{itemize}
\item the endpoints of~$\delta$ if~$\delta$ is not a central chord or a central chord of the same type as~$T_\sigma$,~and
\item the vertex of the $2n$-gon which is an endpoint of~$\delta$ and its opposite vertex if~$\delta$ is a central chord of different type than~$T_\sigma$.
\end{itemize} 
Let~$w_{\sigma,\delta}=w_{\sigma,\delta}(T)$ be the sum of the weights of all perfect matchings of the graph~$G_{\sigma,\delta}$, where the weight of a perfect matching is the product of the weights of its edges. Define~$m_{\sigma,\delta}=m_{\sigma,\delta}(T)$ as~$w_{\sigma,\delta}$ divided by the product of the weights of all internal diagonals in~$T_\sigma$. 

\begin{theorem}
Let~$T$ be a centrally symmetric pseudotriangulation and~$\delta$ be a chord of~$\configD_n$. For any central triangle~$\sigma$ which is not crossed by~$\delta$, the cluster variable $\diagToVar_\delta$ associated to the centrally symmetric pair~$\{\delta, \bar\delta\}$ is determined by
\[
\widetilde \diagToVar_\delta = m_{\sigma,\delta},
\] 
where   
\[
\widetilde \diagToVar_\delta =
\begin{cases}
   \diagToVar_\delta &\text{ if } \delta \text{ is not a central chord},\\
  \diagToVar_\delta &\text{ if } \delta \text{ is a central chord of the same type as~$T_\sigma$},\\
  \diagToVar_{\diagD{p}{L}} \diagToVar_{\diagD{p}{R}} &\text{ if } \delta \text{ is a central chord incident to~$p$ of type different than~$T_\sigma$}.\\
\end{cases}
\]
\end{theorem}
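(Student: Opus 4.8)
## Proof Proposal

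The plan is to reduce the theorem to the classical type~$A$ matching formula of Carroll--Price (recalled in the introduction and illustrated in \fref{fig:matchingExample0}), applied to the triangulation~$T_\sigma$, and then to reconcile the two denominators and the boundary labels. First I would observe that the opening construction sends the centrally symmetric pseudotriangulation~$T$ of~$\configD_n$ to an honest triangulation~$T_\sigma$ of a convex polygon, whose weighted vertex-triangle incidence graph is exactly~$G_\sigma$. Since the classical formula asserts that, for an internal diagonal~$\eta$ of a triangulation with endpoints~$a,b$, the associated cluster variable equals the sum of weights of perfect matchings of $G_\sigma \ssm \{a,b\}$ divided by the product of the weights of the internal diagonals crossed by~$\eta$, the essential content of the theorem is that $m_{\sigma,\delta}$, defined with the \emph{global} normalization (dividing $w_{\sigma,\delta}$ by the product of \emph{all} internal diagonals of~$T_\sigma$), still produces the correct type~$D$ variable $\widetilde\diagToVar_\delta$ after the two prescribed vertices are deleted.

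The key steps, in order, are as follows. First I would verify that for a non-central chord~$\delta$ with endpoints~$p,q$, deleting these two black vertices from~$G_\sigma$ is exactly the Carroll--Price deletion, so $w_{\sigma,\delta}$ enumerates matchings whose weighted sum gives the numerator of $\diagToVar_\delta$ expressed in the seed~$T$; the passage from the crossing-diagonal denominator to the all-diagonals denominator is absorbed because the uncrossed diagonals contribute a factor that cancels against the surplus of boundary-one weights, matching the denominator interpretation of item~(v) in Section~\ref{sec:model}. Second, for a central chord~$\delta = \diagD{p}{L}$ (say) of the \emph{same} type as~$T_\sigma$, I would check that the opening has unfolded the disk so that this central chord has become an ordinary diagonal of~$T_\sigma$ with two genuine endpoints, reducing this case to the first. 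Third, and this is where the genuinely type~$D$ phenomenon enters, for a central chord of type \emph{different} from~$T_\sigma$ I would show that the prescribed deletion (the endpoint~$p$ together with its antipode) is precisely the vertex pair cut out by the ghost diagonal $\diag{p}{\bar p}$ in the opened polygon, and that the resulting matching enumerator computes the \emph{product} $\diagToVar_{\diagD{p}{L}}\diagToVar_{\diagD{p}{R}}$ rather than a single variable --- exactly the quantity appearing in the third line of the case distinction. The exchange relations of Section~\ref{sec:model}, in particular the third and fourth relations of \fref{fig:typeDflip} in which the product $\diagToVar_{\diagD{\bar p}{L}}\diagToVar_{\diagD{p}{R}}$ appears as a single monomial, are what make this product the natural output.

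The cleanest way to organize the verification is by flip induction rather than by direct matching enumeration: starting from a star pseudotriangulation (type left or right), where all variables are trivially the seed variables and $m_{\sigma,\delta}$ is immediate, one checks that a single flip multiplies the matching enumerators in exactly the way dictated by the exchange relation. Here I would use the standard fact (Kuo-style condensation / Plücker-type grassmann identities for matchings of bipartite graphs obtained from triangulations) that deleting a different pair of vertices from~$G_\sigma$ yields enumerators satisfying the same three-term Ptolemy relation $\Pi(\pseudoquadrangle,p,r)\,\Pi(\pseudoquadrangle,q,s) = \Pi(\pseudoquadrangle,p,q)\,\Pi(\pseudoquadrangle,r,s) + \Pi(\pseudoquadrangle,p,s)\,\Pi(\pseudoquadrangle,q,r)$ that governs the flip, so that $m_{\sigma,\delta}$ satisfies the same recursion as $\widetilde\diagToVar_\delta$ with matching initial conditions, forcing equality.

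The hardest part will be the third case, the central chord of opposite type, for two intertwined reasons. First, one must pin down precisely which two vertices the opening identifies as ``the endpoint of~$\delta$ and its opposite vertex,'' and confirm that this deletion is geometrically forced by the way the disk is cut open along a central chord of~$\sigma$ --- the opening is not symmetric in the two central types, and the asymmetry is exactly what distinguishes the three cases. Second, one must explain why the enumerator produces the product $\diagToVar_{\diagD{p}{L}}\diagToVar_{\diagD{p}{R}}$ rather than a single variable; I expect this to follow from the observation that in the opened polygon the two central chords $\diagD{p}{L}$ and $\diagD{p}{R}$ become two diagonals sharing the vertex~$p$, whose combined denominator is the product of their crossed-diagonal products, but verifying that no extra boundary factor of~$z{+}1$ (visible in the examples of \fref{fig:matchingExample1}) is lost or double-counted in the global normalization will require care with the weights on the edges incident to the disk.
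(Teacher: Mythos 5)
Your plan coincides with the paper's proof: the paper likewise reduces everything to the type~$A$ argument of Carroll--Price as written up by Propp, runs a flip induction in which Kuo's condensation lemma (applicable because the opened triangulation~$T_\sigma$ is outer-planar, so all black vertices of~$G_\sigma$ lie on its outer face) shows that the deleted-pair enumerators~$\widetilde m_{i,j}$ obey the same Ptolemy exchange recursion $y_{p,r}y_{q,s}=y_{p,q}y_{r,s}+y_{p,s}y_{q,r}$ as the cluster variables, and handles your ``hard'' third case exactly as you predict, via a pseudoquadrangle tangent to the disk whose two corners on the tangent geodesic give $y_{i,j}=\diagToVar_{\diagD{p}{L}}\diagToVar_{\diagD{p}{R}}$ while $\widetilde m_{i,j}=m_{\sigma,\delta}$. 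One small correction: the induction's initial conditions must be taken at the seed itself --- $m_{\sigma,\delta}=1$ for boundary edges and $m_{\sigma,\delta}=\diagToVar_\delta$ for chords~$\delta$ of~$T$, as in the paper's steps (1) and (2) --- not at a star pseudotriangulation, whose variables are nontrivial Laurent polynomials in a general seed~$T$; and no denominator reconciliation is needed, since the Carroll--Price formula as quoted in the introduction already divides by \emph{all} the seed variables rather than only the crossed ones, which is precisely the normalization defining~$m_{\sigma,\delta}$.
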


\begin{proof}
The proof of this result is identical to the proof of~\cite[Theorem 2.1]{Prop} in type~$A$. The result follows directly from the following three main steps.

(1) $m_{\sigma,\delta}=1$ if~$\delta$ is a boundary edge.

(2) $m_{\sigma,\delta}=\diagToVar_\delta$ if~$\delta$ is a chord of the pseudotriangulation~$T$.

(3) Consider a pseudoquadrangle~$\pseudoquadrangle$ not crossing the central triangle~$\sigma$, and cyclically label its vertices by~$p,q,r,s$. Denote by~$y_{i,j}=\Pi(\pseudoquadrangle,i,j)$ for $i, j \in \{p,q,r,s\}$ according to the geometric model in Section~\ref{sec:model}. These variables satisfy the relation 
\[
y_{p,r}y_{q,s} = y_{p,q}y_{r,s} + y_{p,s}y_{q,r}.
\]
Now, for every pair~$i,j$ of black vertices in the graph~$G_{\sigma,\delta}$, denote by~$\widetilde w_{i,j}$ the sum of the weights of all perfect matchings of the graph obtained by deleting the vertices~$i$ and~$j$ from~$G_{\sigma,\delta}$, and define~$\widetilde m_{i,j}$ as~$\widetilde w_{i,j}$ divided by the product of the weights of all internal diagonals in~$T_\sigma$. The main ingredient of this step is to show that
\[
y_{i,j} = \widetilde m_{i,j}
\]
for every pair $i,j$ of corners of a pseudoquadrangle not crossing~$\sigma$. 
Similarly as in the proof of~\cite[Theorem 2.1]{Prop}, we use a graph theoretic lemma by E.~Kuo~\cite[Theorem~2.5]{Kuo}:

\para{Condensation Lemma} Let~$G$ be a (weighted) bipartite planar graph with 2 more black vertices than white vertices. If $p,q,r,s$ are black vertices that appear in cyclic order on a face of~$G$, then 
\[
w(p,r)w(q,s) = w(p,q)w(r,s) + w(p,s)w(q,r),
\]
where $w(i,j)$ denotes the (weighted) number of perfect matchings of the graph obtained from~$G$ by deleting the vertices~$i$ and~$j$ with all their incident edges. 

\medskip
After the opening of~$T$, the triangulation~$T_\sigma$ is outer-planar, so that all black vertices of~$G_\sigma$ lie on its external face. Therefore, the condensation lemma applies to the graph~$G_\sigma$, and leads to
\[
\widetilde m_{p,r}\widetilde m_{q,s} = \widetilde m_{p,q}\widetilde m_{r,s} + \widetilde m_{p,s}\widetilde m_{q,r}.
\]
This relation together with (1) and (2) imply $y_{i,j} = \widetilde m_{i,j}$ for every pair $i,j$ of corners of a pseudoquadrangle not crossing~$\sigma$ as desired. One interesting case is when the pseudoquadrangle is tangent to the disk~$\disk$, and $i,j$ are the corners of this tangent pseudoline. In this case,~$y_{i,j}=\diagToVar_{\diagD{p}{L}}\diagToVar_{\diagD{p}{R}}$ for a central chord~$\delta$ incident to~$p$ of type different than~$T_\sigma$, and~$\widetilde m_{i,j}=m_{\sigma,\delta}$.  The case when the pseudotriangle is not tangent to the central circle implies the two remaining cases.
\end{proof}

\begin{remark}
Labeling symmetrically the boundary edges of the $2n$-gon by frozen variables, the results of this section also apply to cluster algebras of type~$D_n$ with coefficients.
\end{remark}

\begin{remark}\label{rem:matchingComparison}
Motivated by the positivity conjecture of cluster algebras, many authors have found explicit Laurent expansion formulas for cluster variables with respect to a cluster seed. Our interpretation follows the lines of G.~Carroll and G.~Price's computation of cluster variables in type~$A$~\cite{CarrollPrice}, which is presented in unpublished work by J.~Propp in~\cite{Prop}. Cluster expansion formulas in type $A$ are also presented by R.~Schiffler in~\cite{Schiffler-clusterExpansion}. In \cite{Musiker}, G.~Musiker presents cluster expansion formulas for cluster algebras of finite classical types (including type $D$) with respect to a bipartite seed. H.~Thomas and R.~Schiffler found expansion formulas for cluster algebras from surfaces without punctures, with coefficients associated to the boundary of the surface~\cite{SchifflerThomas}. This work was generalized by R.~Schiffler in~\cite{Schiffler} where arbitrary coefficient system is considered. An alternative formulation of the results in~\cite{Schiffler} in terms of perfect matchings is presented in~\cite{MusikerSchiffler}. This approach was generalized to cluster algebras from arbitrary surfaces (allowing punctures) by G.~Musiker, R.~Schiffler and L.~Williams in~\cite{MusikerSchifflerWilliams}, where the authors give a precise description of cluster variables with respect to any cluster seed in terms of perfect matchings of snake graphs. The particular case where the surface is given by a convex $n$-gon with one puncture gives rise to a cluster algebra of type $D_n$ (see Remark~\ref{rem:comparisonOldModel}). In Figure~\ref{fig:matchingExample1_msw} and Figure~\ref{fig:matchingExample2_msw}, we illustrate the analogous computation of the cluster variables obtained in Figure~\ref{fig:matchingExample1} and Figure~\ref{fig:matchingExample2}, using the corresponding surface model and the snake graphs in~\cite{MusikerSchifflerWilliams}. In these two figures, the tagged arcs to the puncture are replaced by usual arcs to the puncture, while non-tagged arcs are replaced by loops. The variable associated to a loop is the product of the corresponding tagged and non-tagged arcs. 
We remark that the graphs we obtain are very different to the snake graphs in general. As in the snake graphs, our graphs are obtained by gluing together tiles (4-gons) along their sides, but a vertex may be contained in many tiles while in the snake graphs it is contained in at most three. The second main difference of our description is that once the cluster seed is fixed any cluster variable can be computed from one of two graphs after deleting two of its vertices, while in~\cite{MusikerSchifflerWilliams}, the snake graph depends on the cluster variable that is being computed (see Figures~\ref{fig:matchingExample1},~\ref{fig:matchingExample2},~\ref{fig:matchingExample1_msw} and~\ref{fig:matchingExample2_msw}). The two graphs in our description can be obtained by choosing (one or) two internal triangles $\sigma$ and $\sigma'$ such that any centrally symmetric pair of chords has a representative that does not cross one of the triangles. All cluster variables can then be obtained in terms of weighted perfect matching enumeration on the graph $G_\sigma$ or~$G_{\sigma'}$ after deleting two of its vertices.    

\begin{figure}[p]
	\centerline{
	\begin{tabular}{cc@{\hspace{0cm}}c}
		\includegraphics[width=.3\textwidth]{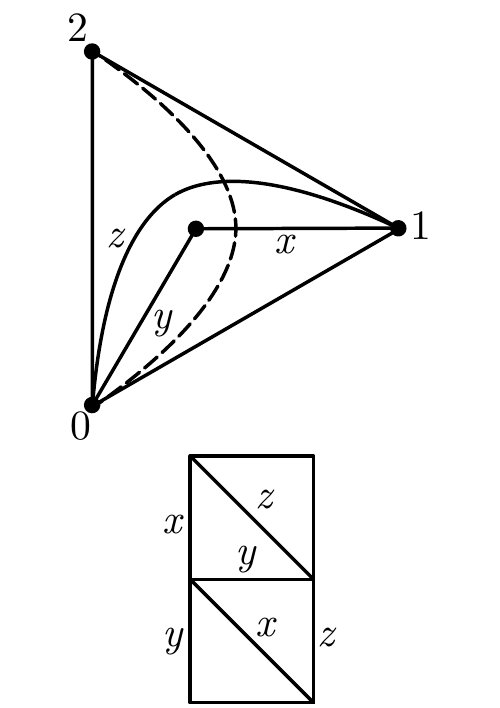} &
		\includegraphics[width=.3\textwidth]{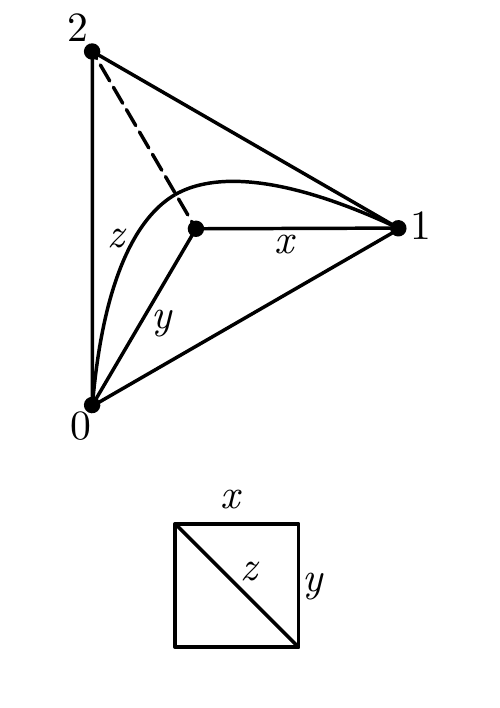} &
		\includegraphics[width=.3\textwidth]{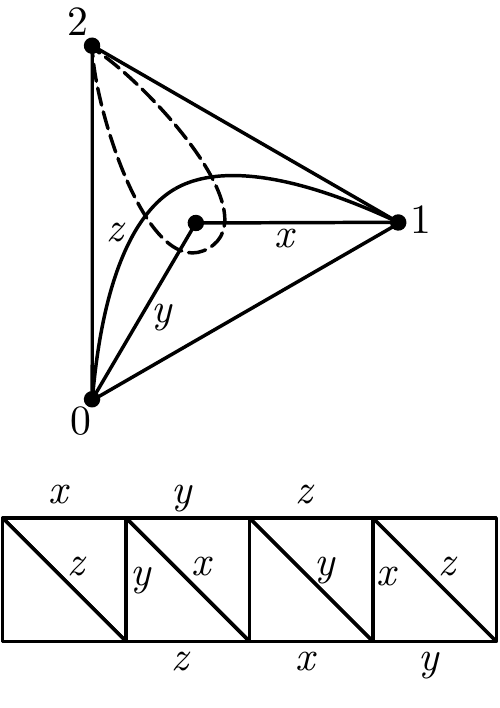} \\[.1cm]
		\multirow{2}{*}{$\diagToVar_{\diag{\bar 0}{\bar 2}} = \chi_{M_{0,2}} = \displaystyle \frac{x+y+yz}{xz}$} & \multirow{2}{*}{$\diagToVar_{\diagD{2}{R}} = \chi_{M_{2,2}^{-1}} = \displaystyle \frac{x+y}{z}$} & $\diagToVar_{\diagD{\bar 2}{R}} \cdot \diagToVar_{\diagD{\bar 2}{L}}= \chi_{M_{2,2}^{-1}} \cdot \chi_{M_{2,2}}$ \\ & & $\qquad\qquad = \displaystyle \frac{(x+y)(x+y+xz+yz)}{xyz^2}$
	\end{tabular}
	}
	\caption{G.~Musiker, R.~Schiffler and L.~William's computation of the cluster variables of type~$D_3$ from Figure~\ref{fig:matchingExample1} using the surface model and the corresponding snake graphs~\cite{MusikerSchifflerWilliams}.}
	\label{fig:matchingExample1_msw}
\end{figure}
 
\begin{figure}[p]
	\vspace{.3cm}
	\centerline{
	\begin{tabular}{lll}
		\includegraphics[width=.3\textwidth]{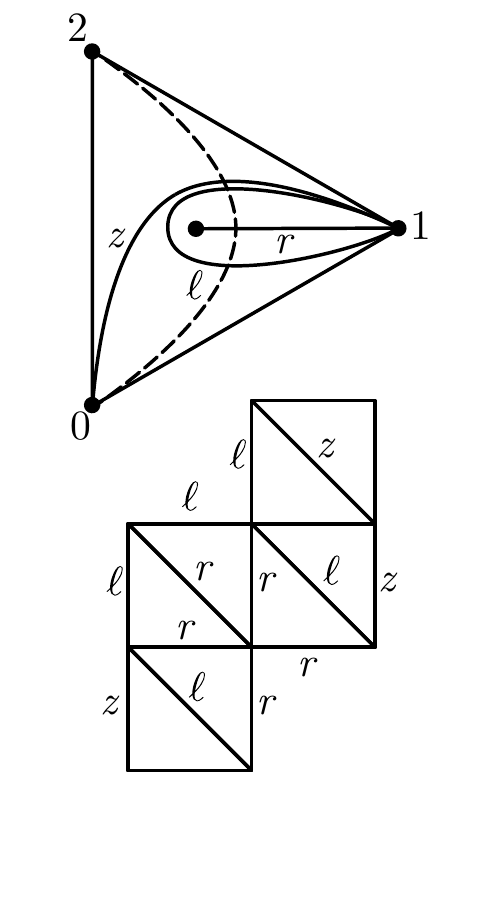} &
		\includegraphics[width=.3\textwidth]{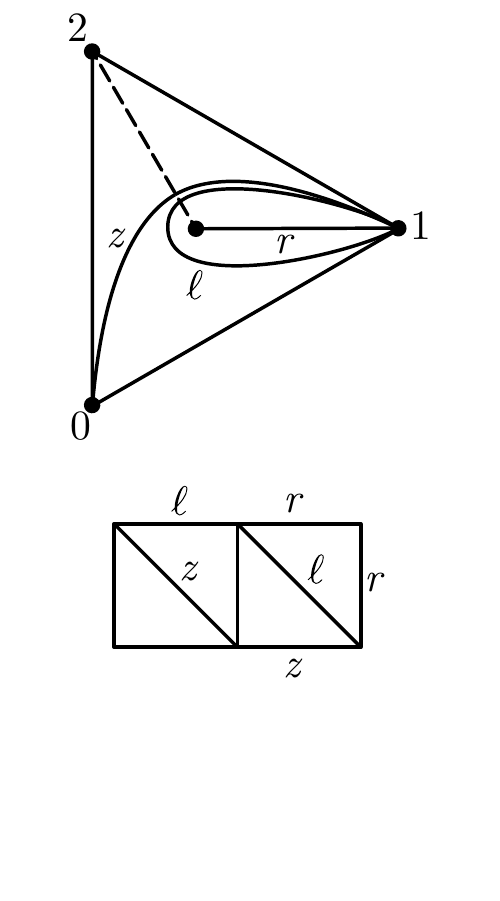} &
		\includegraphics[width=.3\textwidth]{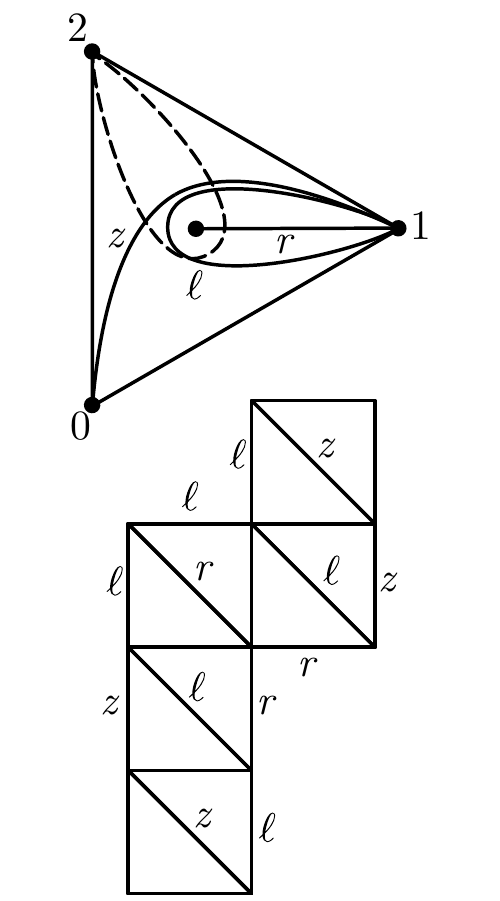} \\[.1cm]
		$\diagToVar_{\diag{\bar 0}{\bar 2}} = \chi_{M_{0,2}}$ & $\diagToVar_{\diagD{2}{R}} = \chi_{M_{2,2}^{-1}}$ & $\diagToVar_{\diagD{\bar 2}{R}} \cdot \diagToVar_{\diagD{\bar 2}{L}}= \chi_{M_{2,2}^{-1}} \cdot \chi_{M_{2,2}}$ \\
		$\quad = \displaystyle \frac{r\ell \big( \ell + (z+1)^2 \big)}{r \ell^2 z}$ & $\quad = \displaystyle \frac{\ell r + rz + r}{\ell z}$ & $\qquad = \displaystyle \frac{(\ell+z+1)^2}{\ell z^2}$ \\[.2cm] 
		$\quad = \displaystyle \frac{xy+(z+1)^2}{xyz}$ & $\quad = \displaystyle \frac{xy+z+1}{yz}$ & $\qquad = \displaystyle \frac{(xy+z+1)^2}{xyz^2}$
	\end{tabular}
	}
	\caption{G.~Musiker, R.~Schiffler and L.~William's computation of the cluster variables of type~$D_3$ from Figure~\ref{fig:matchingExample2} using the surface model and the corresponding snake graphs~\cite{MusikerSchifflerWilliams}. Note that $r=x$ and $\ell=xy$ is the product of the cluster variables of the two tagged and non-tagged arcs corresponding to the loop.}
	\label{fig:matchingExample2_msw}
\end{figure}
\end{remark}


\section{Polytopes}
\label{sec:polytopes}

Let~$P$ be a set of~$p$ points in general position (\ie no three on a line) in the plane with $h$ boundary and~$i$ interior points. A \defn{(pointed) pseudotriangulation} of~$P$ is a maximal set of edges connecting points of~$P$ that is crossing-free and pointed (any vertex is adjacent to an angle wider than~$\pi$). We refer to~\cite{RoteSantosStreinu-survey} for a survey on pseudotriangulations and their properties. Using rigidity properties of pseudotriangulations and a polyhedron of expansive motions, G.~Rote, F.~Santos and I.~Streinu showed in~\cite{RoteSantosStreinu-pseudotriangulationPolytope} that the flip graph on (pointed) pseudotriangulations of~$P$ can be realized as the graph of a~$(h+2i-3)$-dimensional polytope~$\Pseudo(P)$, called the \defn{pseudotriangulation polytope}. $3$- and $4$-dimensional examples are illustrated on \fref{fig:pseudotriangulationPolytopes}.

\begin{figure}[h]
	\centerline{\includegraphics[width=.95\textwidth]{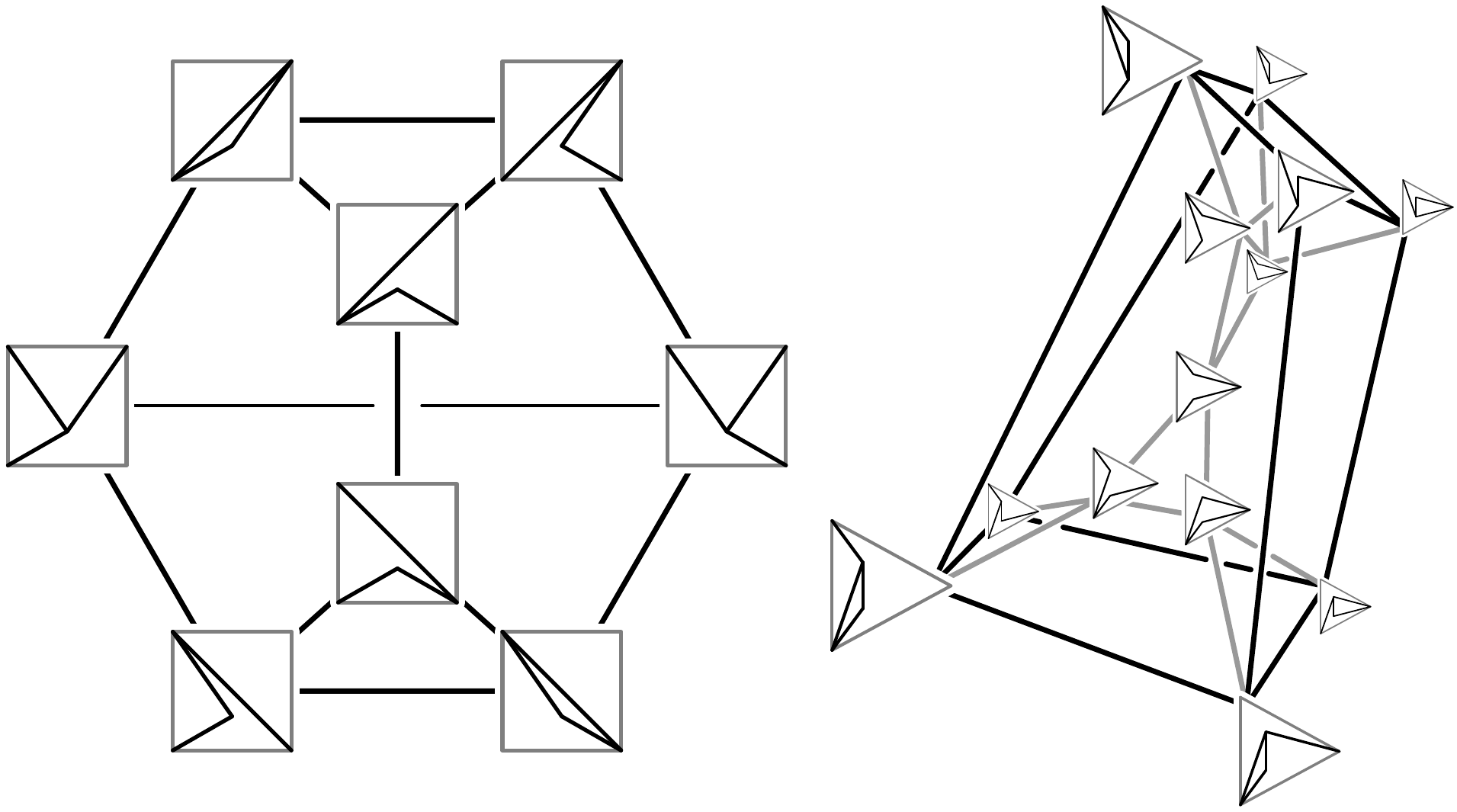}}
	\vspace{-.3cm}
	\caption{Two pseudotriangulation polytopes~\cite{RoteSantosStreinu-pseudotriangulationPolytope}.}
	\label{fig:pseudotriangulationPolytopes}
\end{figure}

In combination to our model, this construction provides polytopal realizations of the type~$D$ associahedra. Indeed, observe that:
\begin{enumerate}
\item One can model the pseudotriangulations of the configuration~$\configD_n$ by pseudotriangulations of the configuration~$\configD_n^\bullet$ obtained by replacing the disk~$D$ by the vertices of a small square~$\square$ centered at the origin. The flip graph on pseudotriangulations of~$\configD_n$ then coincides with the flip graph on pseudotriangulations of~$\configD_n^\bullet$ containing a fixed triangulation of the square~$\square$.
\item The coordinates given in the construction of the pseudotriangulation polytope~\cite{RoteSantosStreinu-pseudotriangulationPolytope} can be chosen such that the vertices of~$\Pseudo(\configD_n^\bullet)$ corresponding to centrally symmetric pseudotriangulations of~$\configD_n^\bullet$ all belong to an affine subspace. The convex hull of these vertices then realizes the flip graph on centrally symmetric pseudotriangulations of~$\configD_n^\bullet$. A face of this polytope corresponding to a fixed triangulation of the square~$\square$ gives a realization of the flip graph on centrally symmetric pseudotriangulations of~$\configD_n$, and thus a realization of the type~$D_n$ associahedron.
\end{enumerate}


\section{Connection to subword complexes}
\label{sec:subwordComplex}

Let~$\tau_0, \tau_1, \dots, \tau_{n-1}$ denote the simple generators of the Coxeter group of type~$D_n$ according to the following labeling of the Coxeter graph:

\begin{center}
\medskip
\includegraphics[scale=1]{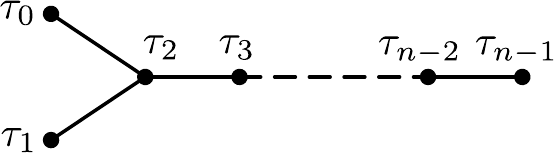}
\medskip
\end{center}

A \defn{Coxeter element} is an element of the group obtained by multiplying the generators in any given order. Fix a Coxeter element~$c$ and a reduced expression~$\sqc$ of~$c$. Let~$\Qc \eqdef \cwoc$ be the word formed by the concatenation of~$\sqc$ with the \defn{$\sqc$-sorting word} for~$\wo$ (\ie the lexicographically first reduced expression of~$\wo$ in~$\sqc^\infty$).
In the particular case of type~$D_n$, $\wo(c)=\sqc^{n-1}$ if~$\tau_0$ and~$\tau_1$ are consecutive in~$\sqc$ when considered up to commutation of consecutive commuting letters. Otherwise,~$\wo(c)$ is equal to the word obtained from~$\sqc^{n-1}$ by replacing the last appearance of~$\tau_0$ by~$\tau_1$ if~$\tau_0$ appears after~$\tau_1$ in~$\sqc$, or by replacing the last appearance of~$\tau_1$ by~$\tau_0$ if~$\tau_1$ appears after~$\tau_0$ in~$\sqc$.
Denote by~$m$ the length of~$\Qc$ and consider the rotation~$\tausqc: [m] \longrightarrow [m]$ on the positions in the word~$\Qc$ defined as follows. If~$q_i=s$, then~$\tausqc(i)$ is defined as the position in~$\Qc$ of the next occurrence of~$s$ if possible, and as the position of the first occurrence of~$\wo s \wo$ otherwise.

We now present an explicit bijection~$\posToDiagc$ between positions in~$\Qc$ and centrally symmetric pairs of chords of~$\configD_n$, which will enable us to characterize centrally symmetric pseudotriangulations of~$\configD_n$ in terms of reduced expressions of~$\wo$ in the word~$\Qc$. We keep labeling the vertices of the $2n$-gon counter-clockwise from~$0$ to~$2n-1$, and denote $\bar p \eqdef p + n \modulo{2n}$ for $p \in \{0, \dots, 2n-1\}$. 

\begin{figure}[h]
	\centerline{\includegraphics[scale=.85]{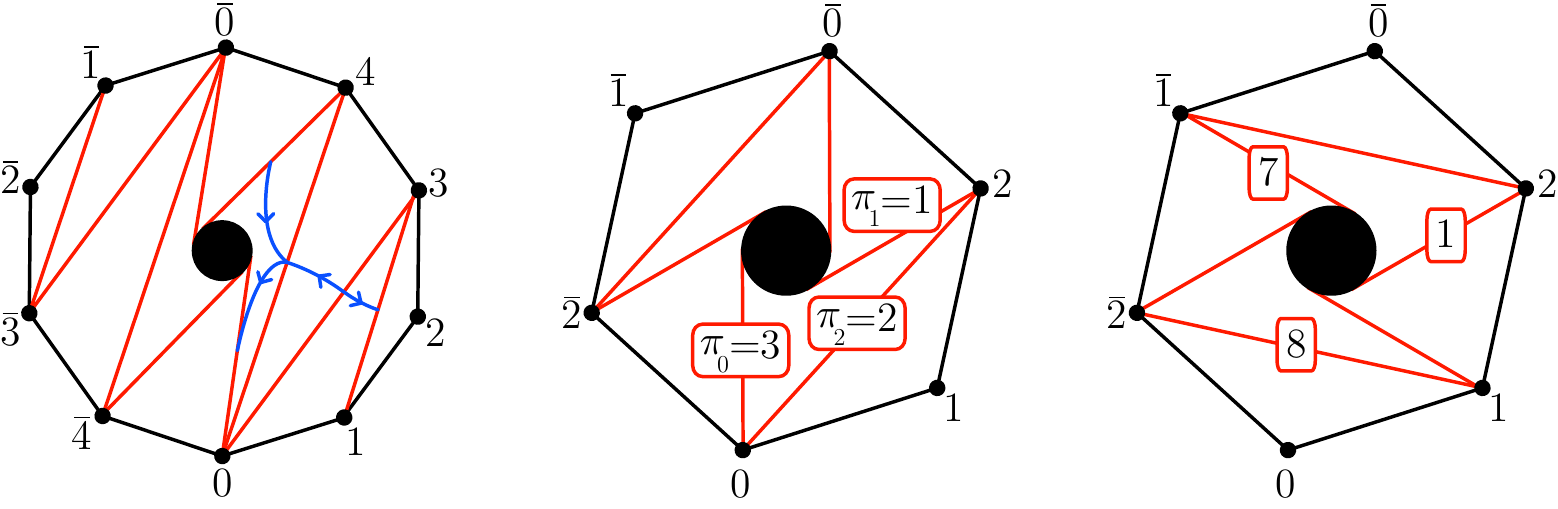}}
	\caption{The centrally symmetric ``accordion'' pseudotriangulation~$Z_c$ for ${c = \tau_3 \tau_4 \tau_0 \tau_2 \tau_1}$ (left) and for ${c = \tau_1 \tau_2 \tau_0}$ (middle) and the pseudotriangulation ${T = \posToDiag_{\tau_1\tau_2\tau_0}(\{1,7,8\}) = \{2^{\textsc l}, \bar 2^{\textsc l}, 1^{\textsc l}, \bar 1^{\textsc l}, \diag{1}{\bar 2}, \diag{\bar 1}{2}\}}$ (right).}
	\label{fig:typeD}
\end{figure}

The bijection~$\posToDiagc$ is defined as follows. Denote by~$\pi_i$ the position of~$\tau_i$ in the Coxeter element~$\sqc$. The positions~$\pi_0$ and~$\pi_1$ are sent to
$$\posToDiagc(\pi_0) = \begin{cases} \diagD{0}{L} \cup \diagD{\bar 0}{L} & \text{if } \pi_0 > \pi_2, \\ \diagD{(n-1)}{R} \cup \diagD{\overline{(n-1)}}{R} & \text{if } \pi_0 < \pi_2, \end{cases}$$
$$\posToDiagc(\pi_1) = \begin{cases} \diagD{0}{R} \cup \diagD{\bar 0}{R} & \text{if } \pi_1 > \pi_2, \\ \diagD{(n-1)}{L} \cup \diagD{\overline{(n-1)}}{L} & \text{if } \pi_1 < \pi_2, \end{cases}$$
and the positions~$\pi_2, \cdots, \pi_{n-1}$ are sent to $\posToDiagc(\pi_i) \eqdef \diag{p_i}{q_i} \cup \diag{\bar p_i}{\bar q_i}$, where
\begin{align*}
p_i & \eqdef \hspace{1.24cm} |\set{j \in [2,n-1]}{j<i \text{ and } \pi_j<\pi_{j+1}}|,  \\
q_i & \eqdef n-1-|\set{j \in [2,n-1]}{j<i \text{ and } \pi_j>\pi_{j+1}}|.  
\end{align*}
In other words, the pairs of diagonals~$\posToDiagc(\pi_2), \dots, \posToDiagc(\pi_{n-1})$ form a centrally symmetric pair of accordions based on  the diagonals~$\posToDiagc(\pi_2) = \diag{0}{n-1} \cup \diag{\bar 0}{\overline{(n-1)}}$. We denote by~$Z_c$ the centrally symmetric pseudotriangulation formed by the diagonals~$\posToDiagc(\pi_0), \dots, \posToDiagc(\pi_{n-1})$. Note that the quiver~$\quiver(Z_c)$ is the Dynkin diagram of~$D_n$ oriented according to~$c$.
Finally, the other values of~$\posToDiagc$ are determined using the rotation map~$\tausqc$. Namely, $\posToDiagc(\tausqc(i))$ is obtained by rotating by~$\pi/n$ the pair of chords~$\posToDiagc(i)$, and exchanging~$\diagD{p}{L}$ with~$\diagD{p}{R}$. See Example~\ref{exm:bijection}.

\begin{example}
\label{exm:bijection}
We have computed this bijection for the Coxeter element $c = \tau_1\tau_2\tau_0$ of~$D_3$ in the following table. The columns corresponding to the pseudotriangulation~${\{\diagD{2}{L}, \diagD{\bar 2}{L}, \diagD{1}{L}, \diagD{\bar 1}{L}, \diag{1}{\bar 2}, \diag{\bar 1}{2}\}}$ are shaded. The reader is invited to check that the complement of these columns forms a reduced expression of~$\wo$ in~$\Qc$.
For this, it is useful to interpret the Coxeter group of type~$D_n$ in terms of signed permuations of~$[n]$ with and even number of negative entries. The generator~$\tau_0=(\bar 1\ \bar 2)$ is the transposition that interchanges 1 and 2 and changes their signs. The other generators~$\tau_i=(i\ i+1)~$ are the usual simple transpositions of the symmetric group. The longest element is the signed permutation~$\bar 1 \bar2 \bar 3 \dots \bar n$ if~$n$ is even, or~$1\bar 2\bar 3 \dots \bar n$ if~$n$ is odd. 

\bigskip
\renewcommand{\arraystretch}{1.6}
\centerline{\begin{tabular}{|x||g|x|x|x|x|x|g|g|x|}
	\hline
	Position $j$ in~$[9]$ & $1$ & $2$ & $3$ & $4$ & $5$ & $6$ & $7$ & $8$ & $9$ \\
	letter $q_j$ of~$\Qc$ & $\tau_1$ & $\tau_2$ & $\tau_0$ & $\tau_1$ & $\tau_2$ & $\tau_0$ & $\tau_1$ & $\tau_2$ & $\tau_1$ \\
	\twolines{3cm}{c.s.~pair of}{chords~$\posToDiagc(j)$ in~$\configD_3$} & $\diagD{2}{L} \cup \diagD{\bar 2}{L}$ & $\diag{0}{2} \cup \diag{\bar 0}{\bar 2}$ & $\diagD{0}{L} \cup \diagD{\bar 0}{L}$ & $\diagD{0}{R} \cup \diagD{\bar 0}{R}$ & $\diag{0}{\bar 1} \cup \diag{\bar 0}{1}$ & $\diagD{1}{R} \cup \diagD{\bar 1}{R}$ & $\diagD{1}{L} \cup \diagD{\bar 1}{L}$ & $\diag{1}{\bar 2} \cup \diag{\bar 1}{2}$ & $\diagD{2}{R} \cup \diagD{\bar 2}{R}$ \\
	\hline
\end{tabular}}
\bigskip
\end{example}

This example illustrates the main connection of this section. Namely, the bijection~$\posToDiagc$ relates centrally symmetric pseudotriangulations of~$\configD_n$ to reduced expressions in the word~$\Qc$ as follows.

\begin{proposition}
The complement of a set~$I \subset [m]$ of positions forms a reduced expression of~$\wo$ in~$\Qc$ if and only if the set~$\posToDiagc(I)$ of centrally symmetric pairs of chords forms a centrally symmetric pseudotriangulation of~$\configD_n$.
\end{proposition}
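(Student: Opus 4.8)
The plan is to establish the equivalence by exploiting the fact, stated in the excerpt, that the quiver~$\quiver(Z_c)$ is the Dynkin diagram of~$D_n$ oriented according to~$c$, and that flips correspond to quiver mutations (Proposition~\ref{prop:quiverMutation}). The key structural tool is the theory of subword complexes of Knutson and Miller, together with the characterization of $c$-cluster complexes as subword complexes $\subwordComplex(\Qc)$ established in~\cite{CeballosLabbeStump}. Concretely, a subset~$I \subset [m]$ is a facet of $\subwordComplex(\Qc)$ precisely when the complement~$[m] \ssm I$ spells a reduced expression of~$\wo$; these facets are in bijection with the $c$-clusters, hence (transporting through the known model) with the centrally symmetric pseudotriangulations of~$\configD_n$. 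So the proposition amounts to checking that the explicit bijection~$\posToDiagc$ is the \emph{right} one, i.e.\ that it intertwines the two combinatorial structures.

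First I would verify the base case: show that the specific facet~$[m] \ssm \{\pi_0, \dots, \pi_{n-1}\}$ of $\subwordComplex(\Qc)$ (the complement of the first occurrences of the letters of~$\sqc$) maps under~$\posToDiagc$ to the accordion pseudotriangulation~$Z_c$. This is essentially a definitional check using the formulas for~$\posToDiagc(\pi_i)$, combined with the observation that~$\quiver(Z_c)$ equals the $c$-oriented Dynkin diagram; the latter matches the root-configuration / initial-cluster data attached to the initial facet of $\subwordComplex(\Qc)$. Second, and this is where the main work lies, I would propagate the bijection along flips/mutations. The rotation map~$\tausqc$ is designed so that $\posToDiagc(\tausqc(i))$ is the geometric rotation of~$\posToDiagc(i)$; I would show that one step of the rotation~$\tausqc$ on positions corresponds on one side to the combinatorial rotation of subword-complex facets (which cyclically shifts reduced words in~$\Qc$) and on the other side to rotating the whole pseudotriangulation by~$\pi/n$ while swapping left and right central chords. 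Since rotation is a bijection of the $2n$-gon commuting with central symmetry and preserving pseudotriangulations and crossings, it acts compatibly on both models, and iterating it from the base facet covers all positions and all facets.

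The crux is then to prove that $\posToDiagc$ sends \emph{compatibility of positions} in the subword complex to \emph{non-crossing} of chord pairs. I would argue that since flips correspond to mutations (Proposition~\ref{prop:quiverMutation}) and since mutation in $\subwordComplex(\Qc)$ is exactly the exchange of a position for the unique other position completing a reduced word, the flip graph on centrally symmetric pseudotriangulations and the facet-adjacency (flip) graph of $\subwordComplex(\Qc)$ are both connected and regular of the same degree~$n$. Having matched one facet with~$Z_c$ and having shown that~$\tausqc$ and rotation induce the same bijection at every step, a connectivity argument transports the facet structure across the entire graph: every facet of $\subwordComplex(\Qc)$ is reached from the initial one by a sequence of flips, each flip maps (under $\posToDiagc$) to a geometric flip, and the image is a genuine pseudotriangulation. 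Conversely any pseudotriangulation arises this way, giving the ``if and only if.''

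The hard part will be controlling the interaction between~$\tausqc$ and the crossing structure near the central disk, where the special letters~$\tau_0,\tau_1$ and the central chords~$\diagD{p}{L},\diagD{p}{R}$ behave differently from the generic diagonals. In particular one must check that the two-valued definition of~$\posToDiagc(\pi_0)$ and~$\posToDiagc(\pi_1)$ (according to whether~$\pi_0,\pi_1$ precede or follow~$\pi_2$) correctly reproduces the folding by central symmetry described in Section~\ref{sec:quivers}, and that the replacement of $\tau_0$ by $\tau_1$ (or vice versa) in the formula for~$\wo(c)$ is consistent with the $\tausqc$-orbit of the central positions. Rather than grinding through all four flip types, I would reduce the verification to the four local configurations of \fref{fig:typeDflip}, since every flip in~$\configD_n$ is locally one of these, and each has already been matched with a quiver mutation; the compatibility of~$\posToDiagc$ with each local move then follows by direct inspection.
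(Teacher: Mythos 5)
The paper itself never proves this proposition --- it explicitly declines to develop the subword-complex argument in detail, and merely surrounds the statement with remarks citing \cite{KnutsonMiller-subwordComplex}, \cite{CeballosLabbeStump} and \cite{PilaudPocchiola} --- so your outline is being measured against the justification the paper gestures at, and in broad strokes you have found it: Knutson--Miller's description of the facets of~$\subwordComplex(\Qc,\wo)$, the isomorphism with the $c$-cluster complex from \cite{CeballosLabbeStump}, the base case $\posToDiagc(\{\pi_0,\dots,\pi_{n-1}\}) = Z_c$, and equivariance of~$\posToDiagc$ with respect to~$\tausqc$ on positions versus rotation by~$\pi/n$ (with the left/right swap) on chords. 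Two of your intermediate claims, however, are wrong or insufficient as stated. First, the $\tausqc$-orbit of the base facet covers all \emph{positions} but by no means all \emph{facets}, so ``iterating it from the base facet covers all positions and all facets'' overreaches. Second, ``both flip graphs are connected and $n$-regular, and one facet is matched'' does not by itself force the fixed vertex bijection~$\posToDiagc$ to match facets: two connected $n$-regular graphs with a common marked vertex need not correspond at all. Everything hinges on the local statement that when a facet~$I$ with $\posToDiagc(I)$ a pseudotriangulation is flipped at position~$i$, the unique position~$j$ completing a reduced expression of~$\wo$ satisfies that $\posToDiagc(j)$ is precisely the chord obtained by geometrically flipping~$\posToDiagc(i)$.

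That local statement is where your proposal has a genuine hole. You propose to verify it ``by direct inspection'' of the four flip types of \fref{fig:typeDflip}, leaning on Proposition~\ref{prop:quiverMutation}; but that proposition matches geometric flips with \emph{quiver mutations}, i.e.\ it lives entirely on the cluster side, and gives no handle on which letter of the word~$\Qc$ enters the facet. On the subword side a flip is a word-theoretic exchange inside reduced expressions of~$\wo$, and nothing in your write-up translates this datum into geometry: the only geometric meaning positions carry so far comes from the base case and rotation, so there is no well-defined object on the word side to ``inspect'' in the four local pictures. To close the gap you need an actual dictionary. One option is the machinery of \cite{CeballosLabbeStump}: their bijection between positions and almost positive roots is characterized by sending the initial positions to the negative simple roots and intertwining~$\tausqc$ with the rotation on roots, so it suffices to check that $c$-compatibility of positions and non-crossing of chord pairs are both rotation-invariant relations agreeing on pairs that involve an initial position --- flagness of both complexes (they are a cluster complex and a non-crossing complex, respectively) then upgrades the pairwise statement to the facet statement, with no flip-by-flip transport needed. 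The other option is the duality of \cite{PilaudPocchiola} sketched in the paper's own remark, under which positions of~$\Qc$ become crossings of a symmetric pseudoline arrangement and flips genuinely are local pictures. Finally, beware the circularity in ``transporting through the known model'': the bijection between $c$-clusters and centrally symmetric pseudotriangulations is one of the things this proposition is meant to certify, so you must ground it independently in Proposition~\ref{prop:quiverMutation} (as you partly do) rather than assume it.
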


\begin{remark}
This statement rephrases the connection between cluster algebras and subword complexes, defined by A.~Knutzon and E.~Miller in~\cite{KnutsonMiller-subwordComplex}. For an element~$w$ and a word~$\Q$ in the generators of a finite Coxeter group, the \defn{subword complex}~$\subwordComplex(\Q,w)$ is the simplicial complex whose ground set is the set of positions in~$\Q$ and whose facets are the complements of reduced expressions of~$w$ in~$\Q$. For any finite Coxeter group~$W$ and any Coxeter element~$c$ of~$W$, C.~Ceballos, J.-P.~Labb\'e and C.~Stump proved in~\cite{CeballosLabbeStump} that the cluster complex of type~$W$ is isomorphic to the subword complex~$\subwordComplex(\Qc, \wo)$.
\end{remark}

\begin{remark}
The connection between type~$D$ cluster algebras and type~$D$ subword complexes can also be seen using the duality between pseudotriangulations and pseudoline arrangements in the M\"obius strip studied by V.~Pilaud and M.~Pocchiola in~\cite{PilaudPocchiola}. We briefly sketch this duality.

\begin{figure}[b]
	\centerline{
		\begin{overpic}[width=1.15\textwidth]{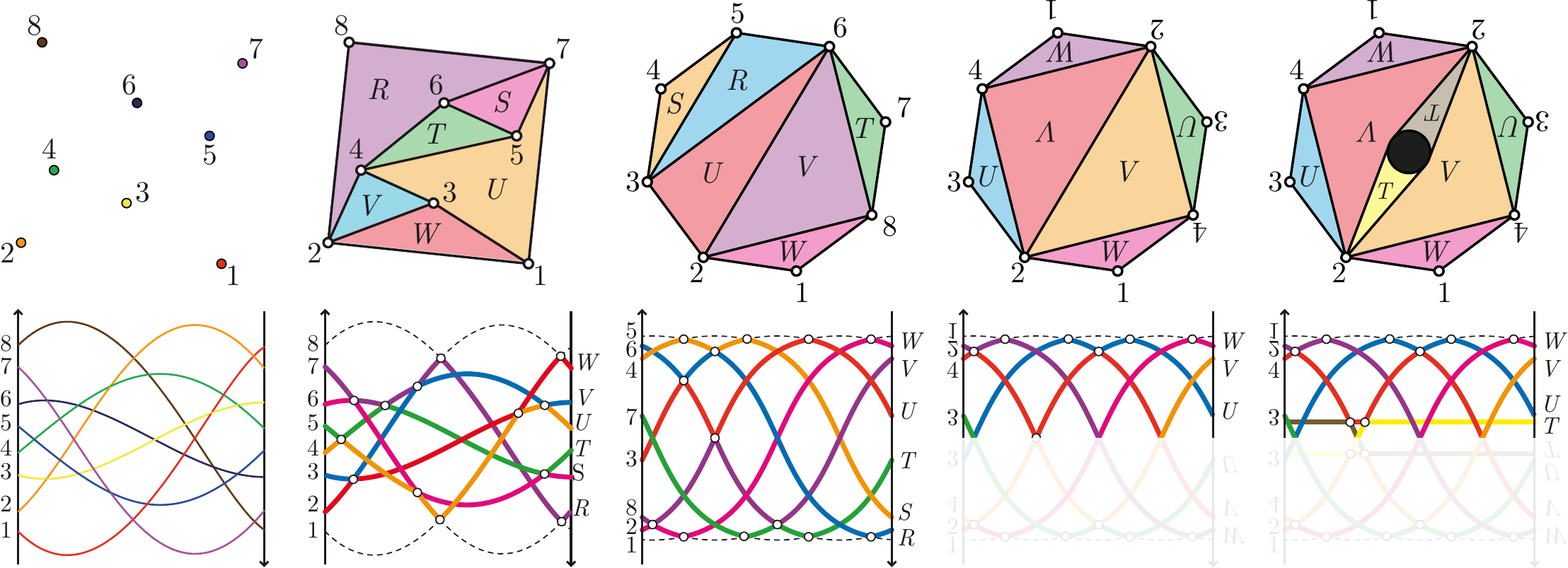}
			\put( 8  ,-2){(a)}
			\put(27.5,-2){(b)}
			\put(48  ,-2){(c)}
			\put(68.5,-2){(d)}
			\put(89  ,-2){(e)}
		\end{overpic}	
	}
	\vspace{.5cm}
	\caption{Duality between geometric configurations in~$\R^2$ (top) and pseudoline arrangements in the line space~$\cM$ (bottom)~\cite{PilaudPocchiola}.}
	\label{fig:duality}
\end{figure}

We parametrize an oriented line in~$\R^2$ by its angle~$\theta$ with the horizontal axis and its algebraic distance~$d$ to the origin. To forget the orientation, we identify the parameters~$(\theta,d)$ and~${(\theta + \pi, -d)}$. The unoriented line space of~$\R^2$ is thus the M\"obius strip~${\cM \eqdef \R^2/(\theta,d) \sim (\theta + \pi, -d)}$. A line~$\ell$ of~$\R^2$ yields a point~$\ell^*$ of~$\cM$, a point~$p$ of~$\R^2$ yields a \defn{pseudoline}~$p^* \eqdef \set{\ell^*}{p \in \ell}$ of~$\cM$ (\ie a non-separating simple closed curve in~$\cM$), and a point set~$\b{P}$ in general position in~$\R^2$ (no three on a line) yields a \defn{pseudoline arrangement}~$\b{P}^* \eqdef \set{p^*}{p \in \b{P}}$ of~$\cM$ (\ie a collection of pseudolines where any two cross precisely once). See \fref{fig:duality}\,(a). Note that lines joining two points of~$\b{P}$ correspond to the crossings in~$\b{P}^*$, and that the tangents to~$\conv(\b{P})$ correspond to the external level of~$\b{P}^*$ (touching the unbounded face of~$\b{P}^*$). 

Consider now a pseudotriangulation~$T$ of~$\b{P}$. Each pseudotriangle~$\pseudotriangle$ of~$T$ yields again a pseudoline~$\pseudotriangle^* \eqdef \set{\ell^*}{\ell \text{ internal tangent to } \pseudotriangle}$. The set~$T^* \eqdef \set{\pseudotriangle^*}{\pseudotriangle \in T}$ is a pseudoline arrangement and it precisely covers~$\b{P}^*$ minus its external level.  See \fref{fig:duality}\,(b). It is shown in~\cite{PilaudPocchiola} that this provides a bijective correspondence between the pseudotriangulations of~$\b{P}$ and the pseudoline arrangements which cover~$\b{P}^*$ minus its external level. These pseudoline arrangements can be seen as facets of a type~$A$ subword complex~$\subwordComplex(\Q,\wo)$ for some word~$\Q$. For example, when~$\b{P}$ is in convex position, it provides a correspondence between triangulations (type~$A$ clusters) and facets of the type~$A$ subword complex~$\subwordComplex(\Qc, \wo)$.  See \fref{fig:duality}\,(c).

Observe now that the central symmetry in~$\R^2$ (around the origin) translates to an horizontal symmetry in~$\cM$. It follows that a centrally symmetric pseudotriangulation~$T$ of a centrally symmetric point set~$\b{P}$ is dual to a horizontally symmetric pseudoline arrangement~$T^*$ on~$\b{P}^*$. Erasing the bottom half of~$T^*$ yields an arrangement which can be interpreted as a facet of a type~$B$ subword complex. For example, when~$\b{P}$ is centrally symmetric and in convex position, it provides a correspondence between centrally symmetric triangulations (type~$B$ clusters) and facets of the type~$B$ subword complex~$\subwordComplex(\Qc, \wo)$.  See \fref{fig:duality}\,(d).

Finally, consider the configuration~$\configD_n$ introduced in this paper. The disk~$D$ in~$\R^2$ yields a \defn{double pseudoline}~$D^* \eqdef \set{\ell^*}{\ell \text{ tangent to } D}$ of~$\cM$ (\ie a separating simple closed curve in~$\cM$). The dual~$\configD_n^*$ of the configuration~$\configD_n$ thus has $2n$ pseudolines and one double pseudoline. A centrally symmetric pseudotriangulation~$T$ of~$\configD_n$ is dual to a horizontally symmetric pseudoline arrangement~$T^*$ on~$\configD_n^*$. Erasing the bottom half of~$T^*$ yields an arrangement which can be interpreted as a facet of the type~$D$ subword complex~$\subwordComplex(\Qc, \wo)$.  See \fref{fig:duality}\,(e).
\end{remark}


\section{$c$-cluster complexes}

In this section, we provide a simple combinatorial description of $c$-cluster complexes of type~$D_n$ as described by Reading in~\cite{Reading-coxeterSortable}. These complexes are more general than the cluster complexes of Fomin and Zelevinsky~\cite{FominZelevinsky-YSystems}, and have an extra parameter $c$ corresponding to a Coxeter element. The particular case when $c$ is a bipartite Coxeter element recovers the cluster complexes of~\cite{FominZelevinsky-YSystems}. 
As in the previous section, consider the centrally symmetric accordion pseudotriangulation~$Z_c$ and centrally symmetric label its chords by $\{\pi_0,\dots, \pi_{n-1}\}$ corresponding to the letters $\tau_0,\dots ,\tau_{n-1}$ to which they correspond to in the Coxeter element $\sqc$. We identify the chords~$\{\pi_0,\dots, \pi_{n-1}\}$ with the negative simple roots~$\{-\alpha_0,\dots,-\alpha_{n-1}\}$, and any other chord $\delta$ with the positive root obtained by adding the simple roots associated to the chords of~$Z_c$ crossed by $\delta$. This gives a bijection between centrally symmetric pairs of chords and almost positive roots.   

We say that two almost positive roots $\alpha$ and $\beta$ are $c$-compatible if their corresponding pairs of chords do not cross. The $c$-cluster complex is the simplicial complex whose faces correspond to sets of almost positive roots that are pairwise $c$-compatible. The maximal simplices in it are called $c$-clusters and correspond naturally to centrally symmetric pseudotriangulations of~$\configD_n$. For instance, the accordion pseudotriangulation~$Z_c$ for $c=\tau_1\tau_2\tau_0$ of type~$D_3$ is illustrated in the middle of Figure~\ref{fig:typeD}. The $c$-cluster corresponding to the pseudotriangulation~$T$ in the right part of the same figure is $\{-\alpha_1,\alpha_2,\alpha_0+\alpha_2\}$. For example, the chord $[1,\bar 2]$ of $T$ corresponds to the positive root $\alpha_0+\alpha_2$ because it crosses the chords of $Z_c$ labeled by $\pi_0$ and $\pi_2$.  


\bibliographystyle{alpha}
\bibliography{CeballosPilaud_perfectMatchings}
\label{sec:biblio}


\end{document}